\definecolor{indigo}{rgb}{0.29, 0.0, 0.51}  
\theoremstyle{plain}
\newtheorem{theorem}{Theorem}
\newtheorem{corollary}[theorem]{Corollary}
\newtheorem{lemma}[theorem]{Lemma}
\theoremstyle{definition}
\theoremstyle{remark}
\newtheorem{remark}[theorem]{Remark}
\numberwithin{theorem}{section}
\newcommand{\R}{\mathbb{R}}           
\newcommand{\NS}{{\mathbb{S}}}
\newcommand{\D}{{\mathbb{D}}}
\newcommand{\Op}{{\mathcal{O}p}}
\newcommand*\bigcdot{\mathpalette\bigcdot@{0.6}}
\newcommand*\bigcdot@[2]{\mathbin{\vcenter{\hbox{\scalebox{#2}{$\m@th#1\bullet$}}}}}
\DeclareFontFamily{U} {cmr}{}
\DeclareFontShape{U}{cmr}{m}{n}{
  <-6> cmr5
  <6-7> cmr6
  <7-8> cmr7
  <8-9> cmr8
  <9-10> cmr9
  <10-12> cmr8
  <12-> cmr9}{}
\DeclareSymbolFont{Xcmr} {U} {cmr}{m}{n}
\DeclareMathSymbol{\Phis}{\mathord}{Xcmr}{8}
\newcommand{\rel}{\operatorname{rel}}
\newcommand{\Id}{{\operatorname{Id}}}
\newcommand{\std}{\operatorname{std}}
\newcommand{\Leg}{\operatorname{Leg}}
\newcommand{\FLeg}{\operatorname{FLeg}}
\newcommand{\DB}{\operatorname{DB}}
\newcommand{\Cont}{\operatorname{Cont}}
\newcommand{\CFr}{\operatorname{CFr}}
\newcommand{\ev}{\operatorname{ev}}
\newcommand{\image}{\operatorname{Image}}
\newcommand{\C}{\operatorname{C}}
\newcommand{\OT}{\operatorname{OT}}
\newcommand{\Emb}{{\operatorname{Emb}}}
\newcommand{\Bypass}{\operatorname{\mathcal{B}}}
\begin{document} 

\title{A Bypass in the Middle}

\author{Dahyana Farias}
\address{Departamento de Matem\'aticas \\ Universidad Aut\'onoma de Madrid \\ Madrid \\ Espa\~na}
\email{dahyana.farias@estudiante.uam.es \\ dahyana.farias@icmat.es}

\author{Eduardo Fern\'{a}ndez}
\address{Department of Mathematics\\ University of Georgia\\ Athens\\ GA, USA}
\email{eduardofernandez@uga.edu}


\maketitle

\begin{abstract}
We provide a topological characterization for a family of bypasses with a fixed attaching arc to be contractible. This characterization is formulated in terms of the existence of a bypass that is disjoint from the given family away from the attaching region. As an application, we provide new proofs of several $h$-principles in overtwisted contact $3$-manifolds.
\end{abstract}


\section{Introduction}\label{sec:intro}

\subsection{Context} Since their inception by Ko Honda \cite{Honda}, bypass disks have been a central object in the classification problem for both contact structures in a given smooth 3-manifold and Legendrians in a given contact 3-manifold. Determining the existence or non-existence of certain bypasses is a crucial aspect of the general strategies employed in these classifications (e.g., \cite{EtnyreHonda,Honda,HondaGluing}).

One of the recent questions that has garnered attention is the classification of families of contact structures and Legendrian embeddings. This interest arises, among other things, from the fact that classifying 1-parameter families of contact structures is closely related to classifying contactomorphisms up to contact isotopy, which is key to understanding the Legendrian isotopy problem outside the standard contact 3-sphere, and that Legendrian loops can be used to construct interesting Lagrangian fillings or higher-dimensional Legendrians (see, for example, \cite{CasalsGao,DingGeiges,EkholmKalman,EliashbergMishachev,FMP22,FMP24,FernandezMin,FernandezMuñoz,GirouxSurfaces,GirouxMassot,Kalman,Min,VogelOvertwisted}). As a consequence of the Fuchs-Tabachnikov theorem \cite{CasalsDelPino,FuchsTabachnikov,Murphy}, this classification can be reduced to studying the homotopy groups of certain bypass embedding spaces, i.e., (families of) bypass embeddings up to homotopy among (families of) bypass embeddings. For Legendrians, this reduction follows from the fact that bypasses can be used to destabilize Legendrians. For contact structures, this follows by attempting to mimic Giroux's construction of adapted open books \cite{GirouxICM} in a parametric fashion combined with \cite{FMP22}.

Motivated by these, we study bypass embedding spaces. Our main result shows that the only potentially interesting behavior in the contact isotopy problem for bypass embeddings arises when intersections between bypasses are present. For experts, this parallels the contact isotopy problem for overtwisted disks: it is topological in nature when there are no intersections \cite{Dymara}, but can become highly non-trivial when intersections are involved \cite{VogelOvertwisted}.

\subsection{Main result}

Before stating our main result, we need to introduce some notation. Let $(M, \xi)$ be a contact 3-manifold and $\Sigma \subseteq (M, \xi)$ a convex surface. A \em bypass embedding \em is an embedding $b: \D^2_{+} \rightarrow (M, \xi)$ of the half-disk $\D^2_+ = \D^2 \cap {y \geq 0} \subseteq \R^2$ with Legendrian boundary whose characteristic foliation matches the bypass characteristic foliation $\mathcal{F}_{B}$ depicted in Figure \ref{fig:bypass1}. In other words, the foliation on $\D^2_+$ determined by $db^{-1}(db(T\D^2_+) \cap \xi)$ is exactly $\mathcal{F}_{B}$. The attaching arc of the bypass is the Legendrian arc $\gamma =\{b(x, 0) : -1\leq x \leq 1\}$.

The bypass embedding $b$ is said to be a bypass embedding for $\Sigma$ if $b(\D^2_+) \cap \Sigma = \gamma$. Given another bypass embedding $\hat{b}$ with the same attaching arc $\gamma$, we say that $\hat{b}$ coincides with $b$ near the attaching arc if there exists an open neighborhood $U \subseteq \D^2_+$ of $\{(x, 0) : -1 \leq x \leq 1\}$ such that $b_{|\overline{U}} = \hat{b}_{|\overline{U}}$. If this neighborhood can be chosen such that $b(\D^2{+} \backslash U) \cap \hat{b}(\D^2_+ \backslash U) = \emptyset$, then we say that $\hat{b}$ is disjoint from $b$ away from the attaching arc. Note that this involves a slight abuse of terminology: $b$ is considered disjoint from itself away from the attaching arc. A schematic of two bypasses disjoint away from the attaching arc is shown in Figure \ref{fig:middle1}.

We denote by $\mathcal{B}_\gamma(\Sigma, (M, \xi))$ the space of bypass embeddings for $\Sigma$ with a fixed attaching arc $\gamma$ that coincide with $b$ near the attaching arc, and by $\mathcal{B}_\gamma^b(\Sigma, (M, \xi)) \subseteq \mathcal{B}_\gamma(\Sigma, (M, \xi))$ the subspace of bypass embeddings that are disjoint from $b$ away from the attaching arc.

Our main result provides a necessary and sufficient condition for a family of bypass embeddings with fixed attaching arc to be contractible:

\begin{theorem}[Bypass in the Middle]\label{teo:bypassMiddle}
     The inclusion $\mathcal{B}_\gamma^b(\Sigma,(M,\xi))\hookrightarrow \mathcal{B}_\gamma(\Sigma,(M,\xi))$ is null-homotopic.
\end{theorem}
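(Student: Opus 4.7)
The plan is to build an explicit continuous null-homotopy $H\colon \mathcal{B}_\gamma^b(\Sigma,(M,\xi))\times[0,1]\to\mathcal{B}_\gamma(\Sigma,(M,\xi))$ with $H(\hat b,0)=\hat b$ and $H(\hat b,1)=b$, contracting the inclusion to the constant map at the fixed bypass $b$. Because the target is the larger space $\mathcal{B}_\gamma$, intermediate bypasses $H(\hat b,t)$ are free to intersect $b$, and it is precisely this relaxation that the argument exploits.

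The first ingredient is a normal-form lemma for a pair of bypasses sharing an attaching arc and disjoint away from it. Given $\hat b\in\mathcal{B}_\gamma^b$, the configuration $b(\D^2_+)\cup\hat b(\D^2_+)$ is an embedded $2$-complex obtained by gluing two half-disks along $\gamma$ (together with the coincident collar in $U$), and I would show it admits a tubular contact neighborhood $V_{\hat b}$ contactomorphic to a universal model $(V_{\std},\xi_{\std})$. The model is built by gluing two copies of the standard half-ball neighborhood of a bypass along a shared neighborhood of the attaching arc, yielding a tight ball in which both $b$ and $\hat b$ appear as standard bypass embeddings. Existence and uniqueness of the identification, relative to the common data near $\gamma$, would follow from Giroux's uniqueness theorem for neighborhoods of convex surfaces, after suitable corner-rounding.

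Inside the universal model I would write down a canonical path of bypass embeddings $\{b^t\}_{t\in[0,1]}$ connecting the standard image of $\hat b$ to that of $b$, by integrating a compactly supported contact vector field that fixes a neighborhood of $\gamma$ and sweeps one half-disk onto the other through the interior of $V_{\std}$. Pulling this path back through the contactomorphism produces the candidate $H(\hat b,t)$. Since the supporting vector field is compactly contained in $V_{\hat b}$, which meets $\Sigma$ only along $\gamma$, each intermediate bypass meets $\Sigma$ exactly in $\gamma$; thus $H(\hat b,t)\in\mathcal{B}_\gamma$ for every $t$, with $H(\hat b,0)=\hat b$ and $H(\hat b,1)=b$ as required.

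The principal obstacle is parametric consistency. Given a family $\{\hat b_s\}_{s\in S}$ in $\mathcal{B}_\gamma^b$, the neighborhoods $V_{\hat b_s}$, the contactomorphisms to $(V_{\std},\xi_{\std})$, and the interpolating vector fields must depend continuously on $s$, producing a well-defined map on $\mathcal{B}_\gamma^b\times[0,1]$. I would address this via a parametric, relative Gray stability statement for bypass pairs with a fixed partner $b$; the relative setting is essential, since the coincidence with $b$ in $U$ anchors the normal form and removes the usual ambiguity of the contactomorphism with the model, enabling a continuous selection. Making this parametric uniqueness precise, and checking that the resulting family of vector fields assembles into a jointly continuous construction, is where I expect the bulk of the technical effort to concentrate.
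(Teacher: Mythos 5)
Your reduction to a tight ball containing both half-disks is the right first move and is exactly how the paper starts: disjointness away from $\gamma$ plus Giroux's neighborhood theorem lets one embed $\Op(b(\D^2_+)\cup\hat b(\D^2_+))$ into an $I$-invariant, hence tight, neighborhood of a single bypass. But the two steps you treat as routine are where the content of the theorem lives. First, the ``canonical path of bypass embeddings obtained by integrating a compactly supported contact vector field that sweeps one half-disk onto the other'' is not an explicit construction: connecting two disjoint bypasses with the same attaching arc inside a tight ball is precisely the statement that a trivial bypass is trivial (Lemma \ref{lem:trivial}, the right-to-life results of Honda and Honda--Huang), a genuinely hard theorem. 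In your glued model the two standard bypasses are related by the flip symmetry of the model, which is not visibly isotopic to the identity, and no contact vector field performing the sweep is written down or known to exist a priori. Second, your claim that the relative condition near $\gamma$ ``removes the usual ambiguity of the contactomorphism with the model, enabling a continuous selection'' is not correct: the identification with the model relative to the attaching region is still ambiguous up to the group of contactomorphisms of a tight ball fixing part of its boundary, and the contractibility of that group is essentially Eliashberg's uniqueness theorem, which the paper deliberately avoids and which you cannot invoke for free under the name of ``relative Gray stability.''

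The paper resolves both issues with Theorem \ref{thm:TrivialBypasses}: the \emph{space} of trivial bypasses with fixed attaching arc is contractible, proved via the fibration $\Cont_0(M,\xi,\rel\Sigma\cup D)\hookrightarrow\Cont_0(M,\xi,\rel\Sigma)\to\Bypass_\gamma$, with Honda's lemma giving surjectivity and a shrinking-Hamiltonian trick handling the fiber. Parametric consistency is then obtained not by a continuous choice of normal forms but by observing that for each parameter the space of homotopies from $b^z$ to $b$ inside the tight ball $C^z$ is nonempty and contractible, so the tautological microfibration over the parameter space admits a section by Weiss's microfibration lemma. To repair your outline you would need to replace ``integrate an explicit vector field'' by an appeal to Lemma \ref{lem:trivial} and its parametric upgrade, and replace ``continuous selection of normal forms'' by a contractible-fiber section argument of this kind; as written, the proposal assumes the crux of the theorem.
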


\begin{remark}
    Let $b_0,b_1\in\mathcal{B}_\gamma (\Sigma,(M,\xi))$ be two bypass embeddings for $\Sigma$ that coincide near the attaching arc $\gamma$. The result above implies that if there exists a third bypass embedding $b\in\mathcal{B}_\gamma (\Sigma,(M,\xi))$ coinciding with both $b_0$ and $b_1$ near the attaching arc, but becoming disjoint from both $b_0$ and $b_1$ away from the attaching arc, from now on referred as \em bypass in the middle, \em then there exists a homotopy of bypass embeddings connecting $b_0$ and $b_1$. In particular, if $b_0$ and $b_1$ are disjoint away from $\gamma$ then they are isotopic as bypass embeddings. Hence, potentially interesting pairs of bypass embeddings \em must \em intersect each other. This situation is completely analogous to the one for overtwisted disk embeddings: two overtwisted disk embeddings disjoint from a third overtwisted disk are contact isotopic; and two overtwisted disks that intersect may not be contact isotopic \cite{Dymara,VogelOvertwisted}. 
\end{remark}

\begin{remark}
    The existence of the bypass in the middle $b$ in Theorem \ref{teo:bypassMiddle} is fundamental: the space $\mathcal{B}_\gamma(\Sigma,(M,\xi))$ is not even connected in general. For instance, the standard neighborhood of the stabilization of one of the Chekanov-Eliashberg knots \cite{Chekanov} admits two non contact isotopic bypasses with the same attaching arc. We depicted the boundary of both bypasses in Figure
    \ref{fig:chekanov3}. These bypasses are distinguished by means of Legendrian contact homology \cite{Chekanov}. 
    \begin{figure}
        \centering
        \includegraphics[scale=0.4]{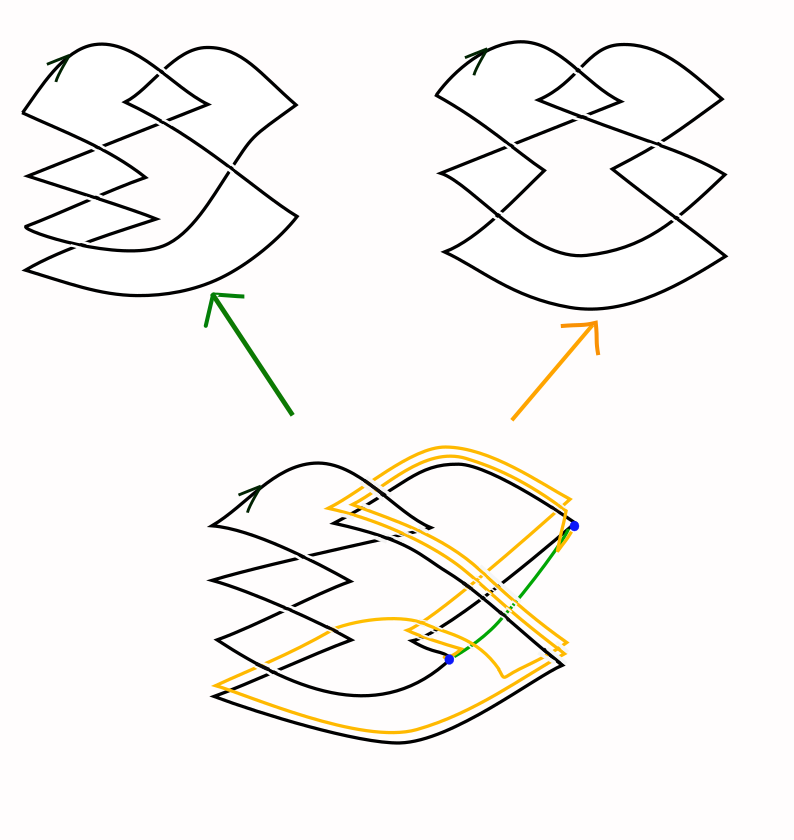}
        \caption{In the second row it is depicted a Legendrian together with the boundary of two bypasses (in green and orange) attached to its standard neighborhood. These bypasses are different because they destabilize the Legendrian into the two Chekanov-Eliashberg $m(5_2)$ knots depicted in the first row which are not Legendrian isotopic.}
        \label{fig:chekanov3}
    \end{figure}
    
\end{remark}

Our proof is elementary in the sense that it does not make use of any parametric statement in $3$-dimensional contact topology, e.g. \cite{EliashbergMishachev,FMP22}. It crucially uses the fact that a trivial bypass is indeed trivial \cite{HondaGluing,HondaHuangBypass}, which can be proved without making use of Eliashberg's uniqueness of tight contact structures in the $3$-sphere \cite{EliashbergTwenty}, and a parametric version of this: the space of trivial bypasses is contractible (Theorem \ref{thm:TrivialBypasses}). We decided to avoid using \cite{EliashbergMishachev,FMP22} in the proof so it could be potentially upgraded to the higher-dimensional case.

\subsection{Applications to overtwisted contact $3$-manifolds}\label{subsec:Applications}

To show the scope of the applicability of the Bypass in the Middle Theorem \ref{teo:bypassMiddle} we will use it to (re)prove many folk $h$-principle type results for overtwisted contact $3$-manifolds. The novelty is that our proofs just depend on Theorem \ref{teo:bypassMiddle} and do not make use of the general overtwisted $h$-princple of Eliashberg \cite{EliashbergOT}. 

Our first result concerns the contractibility of the space of bypasses in the complement of an overtwisted disk:

\begin{corollary}\label{coro:middle-OT}
     Let $(M,\xi)$ be an overtwisted contact $3$-manifold, $\Delta_{\OT}\subseteq (M,\xi)$ an overtwisted disk and $\Sigma\subseteq (M\backslash \Delta_{\OT},\xi)$ a convex surface with Legendrian boundary. Then, for any given attaching arc $\gamma\subseteq \Sigma$ the space of bypass embeddings $\mathcal{B}_\gamma(\Sigma,(M\setminus\Delta_{\OT},\xi))$ is contractible.
\end{corollary}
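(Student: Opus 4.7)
The plan is to show that every continuous map $f\colon S^k\to\mathcal{B}_\gamma(\Sigma,(M\setminus\Delta_{\OT},\xi))$ is null-homotopic for every $k\geq 0$, thereby establishing contractibility. The key step is to construct, given such an $f$, a single bypass $b\in\mathcal{B}_\gamma(\Sigma,(M\setminus\Delta_{\OT},\xi))$ that is ``in the middle'' of the entire image $f(S^k)$, so that $f$ factors through $\mathcal{B}_\gamma^b(\Sigma,(M\setminus\Delta_{\OT},\xi))$; Theorem~\ref{teo:bypassMiddle} then immediately produces a null-homotopy of $f$ in $\mathcal{B}_\gamma(\Sigma,(M\setminus\Delta_{\OT},\xi))$.

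To produce such a $b$, I would first use compactness of $S^k$ together with the coincidence condition built into $\mathcal{B}_\gamma$ to find a common neighborhood $U_0\subseteq\D^2_+$ of the attaching arc on which every $f(t)$ agrees with the chosen base bypass. The set $K:=\bigcup_{t\in S^k} f(t)(\D^2_+\setminus U_0)$ is then a compact subset of $M\setminus\Delta_{\OT}$, and I pick a neighborhood $V$ of $\Delta_{\OT}$ disjoint from $K$. Next I choose a Legendrian arc $\alpha\subseteq M\setminus K$ from a point near the attaching region of the base bypass to $V$, and appeal to the folklore construction producing a bypass from an overtwisted disk: one extends the base bypass along a thin tubular neighborhood of $\alpha$ and closes off inside $V$ using $\Delta_{\OT}$ as a bypass ``cap.'' With the tubular neighborhood chosen thin enough, $b(\D^2_+\setminus U_0)$ is disjoint from $K$, whence $f(t)\in\mathcal{B}_\gamma^b$ for every $t\in S^k$, with $U_0$ as the common witnessing neighborhood.

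The main obstacle will be arranging the connecting arc $\alpha\subseteq M\setminus K$. When $k=0$, $K$ is two-dimensional and generically non-separating, so $\alpha$ exists immediately. For larger $k$, however, $K$ can attain dimension up to $k+2$ and in principle separate the attaching region from $\Delta_{\OT}$. I would resolve this through a local-to-global argument: cover $S^k$ by small open sets $\{W_i\}$ over which the family is $C^0$-close to a single bypass, so that each $\bigcup_{t\in\overline{W_i}}f(t)(\D^2_+\setminus U_0)$ lies near a single embedded $2$-disk and is hence non-separating. On each $\overline{W_i}$ the construction above yields a local bypass in the middle $b_i$ and a local null-homotopy via Theorem~\ref{teo:bypassMiddle}. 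Patching these along the nerve of the cover, by repeatedly invoking Theorem~\ref{teo:bypassMiddle} to bridge double and higher overlaps through intermediate bypasses in the middle, will be the most delicate part of the argument.
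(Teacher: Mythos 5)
Your first half is exactly right and matches the paper: reduce to exhibiting a single bypass in the middle of the whole family and then quote Theorem~\ref{teo:bypassMiddle}. You also correctly diagnose the real difficulty: for $k\geq 1$ the set $K=\bigcup_{t}f(t)(\D^2_+\setminus U_0)$ can separate the attaching region from $\Delta_{\OT}$, so a connecting arc in $M\setminus K$ need not exist. The gap is in your proposed resolution. The local-to-global patching over a cover of $S^k$ does not go through as described: each $W_i$ gives a null-homotopy of $f|_{\overline{W_i}}$ ending at a constant bypass $b_i$, but gluing these requires compatibility on all overlaps of the nerve, and the obstructions to achieving that live precisely in the homotopy groups of $\mathcal{B}_\gamma(\Sigma,(M\setminus\Delta_{\OT},\xi))$ --- the thing you are trying to show vanish. ``Bridging overlaps through intermediate bypasses in the middle'' would require, at each stage, a bypass disjoint from a union of the $b_i$'s \emph{and} of the local families over the overlap, which reproduces the original separation problem rather than solving it; there is no evident inductive structure that terminates.

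The paper sidesteps this by inverting the roles of the arc and the family. One fixes a single smooth arc $\delta$ from the attaching region to $\Delta_{\OT}$ and uses contractibility of the family of half-disk embeddings (rel $\gamma$) to produce a family of arcs $\delta^{z,r}$ interpolating between $\delta$ and arcs disjoint from each $b^z$ away from the attaching region; the parametric Fuchs--Tabachnikov Theorem~\ref{teo:Fuchs-Taba} makes these arcs Legendrian, and the Contact Isotopy Extension Theorem converts the family of arcs into ambient contactomorphisms $\varphi^{z,r}$. Applying $(\varphi^{z,r})^{-1}$ to the bypasses yields a homotopy (within $\mathcal{B}_\gamma$) from the given family to a new family that is disjoint from the \emph{fixed} arc $\delta$; Vogel's construction (Lemma~\ref{lem:bypassAttachingArc}) along $\delta$ then produces one bypass $B$ in the middle of the entire isotoped family, and Theorem~\ref{teo:bypassMiddle} finishes. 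If you want to salvage your write-up, replace the covering/patching step with this ``move the family off a fixed arc'' argument; the rest of your outline stands.
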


An \em overtwisted disk embedding \em is a smooth embedding $e:\D^2\rightarrow (M,\xi)$ of an overtwisted disk; i.e., its induced characteristic foliation coincides with that of the overtwisted disk depicted in Figure \ref{fig:disk-OT}. The space of embeddings of overtwisted disk embeddings, denoted by $\Emb^{\OT}(\D^2,(M,\xi))$, is specially relevant for the study of the homotopy type of contactomorphism group of $(M,\xi)$, see \cite{Dymara, F-SOT,VogelOvertwisted}. Denote by $\CFr(M,\xi)$ the total space of the bundle of contact frames of a contact $3$-manifold $(M,\xi)$. Since every overtwisted disk can be understood as a pair of disjoint bypasses smoothly glued along their attaching arc, we will deduce the following parametric $h$-principle for overtwisted disks in the complement of a fixed overtwisted disk from the Bypass in the Middle Theorem:

\begin{corollary}\label{cor:OvertwistedDisks}
   Let $(M,\xi)$ be an overtwisted contact $3$-manifold and $\Delta_{\OT}\subseteq (M,\xi)$ an overtwisted disk. Then, the $1$-jet evaluation map at the origin 
   $$ \ev_0:\Emb^{\OT}(\D^2,(M\backslash\Delta_{\OT},\xi))\rightarrow \CFr(M\backslash \Delta_{\OT},\xi) $$
   is a weak homotopy equivalence.
\end{corollary}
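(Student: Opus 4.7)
The plan is to show that $\ev_0$ is a Serre fibration whose fibers are weakly contractible; the weak homotopy equivalence then follows from the long exact sequence of homotopy groups. The fibration property is standard: a parametric Darboux argument combined with the radial model of the overtwisted disk produces local sections on a neighborhood of $0\in\D^2$, which the ambient isotopy extension theorem in $M\setminus\Delta_{\OT}$ extends to path-liftings.

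The core of the proof is the analysis of the fiber $\ev_0^{-1}(F)$ over a fixed contact frame $F\in\CFr(M\setminus\Delta_{\OT},\xi)$. I would first fix a standard overtwisted disk germ $e_{\std}$ realizing $F$ on a neighborhood $U$ of $0\in\D^2$, and observe that, after a contractible choice of isotopy, any $e\in\ev_0^{-1}(F)$ can be assumed to coincide with $e_{\std}$ on $U$. The horizontal diameter $\gamma=\{y=0\}\cap\D^2$ is mapped by $e_{\std}$ to a Legendrian arc and cuts $\D^2$ into two half-disks $\D^2_\pm$ whose characteristic foliations under $e_{\std}$ are precisely the bypass foliation $\mathcal{F}_B$. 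Thickening $\gamma$ to a convex surface $\Sigma$ with Legendrian boundary meeting $e_{\std}$ exactly along $\gamma$ realizes the two halves $b_{\std,\pm}:=e_{\std}|_{\D^2_\pm}$ as bypass embeddings for $\Sigma$ lying on opposite sides. This is the precise sense in which an overtwisted disk is a pair of bypasses glued along their attaching arc.

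Restriction to $\D^2_\pm$ then identifies $\ev_0^{-1}(F)$ with the space of pairs $(b_+,b_-)$, where $b_\pm$ is a bypass embedding in $\mathcal{B}_\gamma(\Sigma,(M\setminus\Delta_{\OT},\xi))$ coinciding with $b_{\std,\pm}$ near $\gamma$ and lying on the corresponding side of $\Sigma$, subject to the global condition that the union $b_+\cup b_-$ is an embedded disk. Each factor $\mathcal{B}_\gamma(\Sigma,(M\setminus\Delta_{\OT},\xi))$ is contractible by Corollary \ref{coro:middle-OT}, so if we could ignore the embeddedness constraint the fiber would be immediately weakly contractible and, combined with the fibration property, the corollary would follow.

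The hardest step is therefore the global embeddedness constraint: a generic pair $(b_+,b_-)$ extending $b_{\std,\pm}$ need not glue into an embedded disk, because although $b_+$ and $b_-$ lie on opposite sides of $\Sigma$ near $\gamma$, they may intersect away from $\Sigma$. To handle this I would use the Bypass in the Middle Theorem \ref{teo:bypassMiddle}, with each $b_{\std,\pm}$ itself playing the role of the bypass in the middle for bypasses on the corresponding side. Since $b_{\std,\pm}$ is disjoint from $b_{\std,\mp}$ away from $\gamma$ and from $\Delta_{\OT}$, the parametric null-homotopies produced by Theorem \ref{teo:bypassMiddle} can be arranged to keep $b_+\cup b_-$ embedded throughout, so the entire fiber contracts to the single point $(b_{\std,+},b_{\std,-})$.
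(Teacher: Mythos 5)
Your setup — reducing to contractibility of the fiber, normalizing near the origin, splitting the disk along the Legendrian diameter $\gamma$ into two bypasses attached to a thickened convex surface $\Sigma$, and invoking Corollary \ref{coro:middle-OT} for each half — is the same skeleton as the paper's proof. But the step you yourself flag as the hardest, preserving embeddedness of $b_+\cup b_-$ during the contraction, is where your argument breaks down. You propose to apply the Bypass in the Middle Theorem \ref{teo:bypassMiddle} with $b_{\std,\pm}$ as the middle bypass for the family on the corresponding side. The hypothesis of that theorem is that the family lies in $\mathcal{B}_\gamma^{b_{\std,\pm}}$, i.e.\ is disjoint from the middle bypass away from $\gamma$; a general element of the fiber is an arbitrary overtwisted disk through the fixed frame and its halves can intersect $e_{\std}(\D^2)$ arbitrarily away from the origin, so this hypothesis simply fails. (And if you instead meant $b_{\std,\mp}$ as the middle for the family of $b_\pm$'s, that is not admissible either: the middle bypass in Theorem \ref{teo:bypassMiddle} must coincide with the family near $\gamma$, whereas $b_{\std,\mp}$ has the opposite germ along $\gamma$.) The subsequent claim that the resulting null-homotopies ``can be arranged'' to keep the union embedded is asserted, not proved, and it is exactly the content that needs an argument.

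The paper resolves this differently and more carefully: it contracts the two halves \emph{sequentially}, not simultaneously. First, the null-homotopy of the family $E(z)_{|\D^2\cap\{x\geq 0\}}$ supplied by Corollary \ref{coro:middle-OT} is promoted, via the Contact Isotopy Extension Theorem, to a family of ambient contactomorphisms that drag the entire disk $E(z)$ along; embeddedness of the union is then automatic, and at the end the positive half is a constant bypass $\tilde b$. Second, the remaining family $E^1(z)_{|\D^2\cap\{x\leq 0\}}$ is contracted by the same mechanism \emph{relative to} $\tilde b$ (the connecting arcs and contact isotopies in the proof of Corollary \ref{coro:middle-OT} are chosen to avoid the now-fixed $\tilde b$). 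If you replace your final paragraph with this two-stage ambient-isotopy argument, your proof goes through; as written, the key step is unjustified.
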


In particular, the previous Corollary implies that two overtwisted disks are contact isotopic if there is a third overtwisted disk disjoint from both. This was previously shown by Dymara \cite{Dymara} in any overtwisted $3$-sphere and for a general overtwisted contact $3$-manifold follow easily from the ideas in \cite{F-SOT}. Analogously to the situation for Theorem \ref{teo:bypassMiddle} the condition of avoiding a fixed overtwisted disk cannot be removed in general: the space of overtwisted disk embeddings could be disconnected as shown by Chekanov and Vogel independently, see \cite{VogelOvertwisted}. On the other hand, a parametric $h$-principle for the whole space of overtwisted disks holds for the subclass of strongly overtwisted contact $3$-manifolds and also for the subspace of overtwisted disks with fixed boundary \cite{F-SOT}.

\begin{figure}[h]
    \centering
    \includegraphics[scale=0.3]{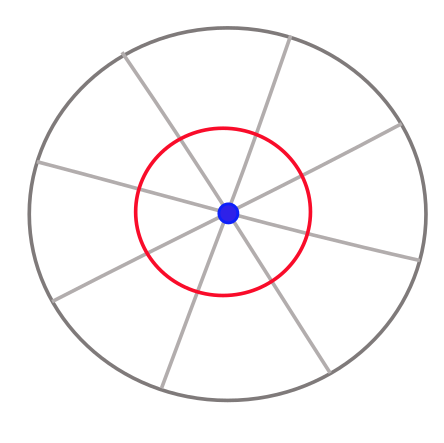}
    \caption{Overtwisted disk. The dividing set is depicted in red and the characteristic foliation in gray.}
    \label{fig:disk-OT}
\end{figure}

Finally, we will use the Bypass in the Middle Theorem \ref{teo:bypassMiddle} to study families of loose Legendrians in overtwisted contact $3$-manifolds, that is Legendrians with overtwisted complement. The space of Legendrian embeddings into a contact $3$-manifold $(M,\xi)$ is denoted by $\Leg(M,\xi)$.

\begin{theorem}\label{thm:StabilizationMapsLooseLegendrians}
Let $(M,\xi)$ be an overtwisted contact $3$-manifold and $\Delta_{\OT}\subseteq (M,\xi)$ an overtwisted disk. Then, for every $t\in\NS^1$ the stabilization at time $t$ operators 
$$ S^t_+:\Leg(M\backslash \Delta_{\OT},\xi)\rightarrow \Leg(M\backslash \Delta_{\OT},\xi) $$
 and 
 $$ S^t_-:\Leg(M\backslash \Delta_{\OT},\xi)\rightarrow \Leg(M\backslash \Delta_{\OT},\xi) $$
are weak homotopy equivalences.
\end{theorem}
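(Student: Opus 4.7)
The plan is to construct a homotopy inverse to $S^t_\pm$ via a canonical destabilization operator whose well-definedness in families is guaranteed by Corollary \ref{coro:middle-OT}. I describe the $+$ case; the negative case is identical.

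For every $\Lambda \in \Leg(M\setminus\Delta_{\OT},\xi)$ the overtwisted disk $\Delta_{\OT}$ lies in the complement of $\Lambda$, so $\Lambda$ is loose in $(M,\xi)$. Hence there exists, in $(M\setminus\Delta_{\OT},\xi)$, a destabilization bypass $b^t_\Lambda$ attached to a standard convex boundary $\Sigma_\Lambda$ around $\Lambda$ along an arc $\gamma_t$ crossing $\Lambda(t)$, whose attachment yields a Legendrian $D^t_+(\Lambda)$ that is the positive destabilization of $\Lambda$ at time $t$. By Corollary \ref{coro:middle-OT}, the space $\mathcal{B}_{\gamma_t}(\Sigma_\Lambda,(M\setminus\Delta_{\OT},\xi))$ is contractible, so the choice of $b^t_\Lambda$ is canonical up to a contractible ambiguity. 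Combined with the parametric uniqueness of standard Legendrian neighborhoods, this yields a continuous map $D^t_+ : \Leg(M\setminus\Delta_{\OT},\xi) \to \Leg(M\setminus\Delta_{\OT},\xi)$, well-defined up to homotopy.

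The verification $D^t_+ \circ S^t_+ \simeq \mathrm{id}$ proceeds as follows. The Legendrian $S^t_+(\Lambda)$ carries a tautological stabilization bypass $b^{\mathrm{st}}$ localized in a small neighborhood of the newly introduced zigzag whose attachment recovers $\Lambda$. Both $b^{\mathrm{st}}$ and the destabilization bypass $b^t_{S^t_+(\Lambda)}$ belong to the contractible space $\mathcal{B}_{\gamma_t}(\Sigma_{S^t_+(\Lambda)},(M\setminus\Delta_{\OT},\xi))$, hence they are homotopic through bypass embeddings and the resulting Legendrians agree up to Legendrian isotopy. The reverse composition $S^t_+ \circ D^t_+ \simeq \mathrm{id}$ follows by the symmetric argument: the destabilization $D^t_+(\Lambda)$ carries a natural restabilization bypass recovering $\Lambda$, which is homotopic to any bypass realizing $S^t_+$ via the same contractibility statement.

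The main obstacle is the parametric construction of the destabilization bypass. One must choose continuously in $\Lambda$ the standard contact neighborhoods, convex surfaces, and attaching arcs, all disjoint from $\Delta_{\OT}$, and then produce $b^t_\Lambda$ in such a way that it is disjoint from the tautological stabilization bypass away from the common attaching arc. The essential point is that $\Delta_{\OT}$ acts as a reservoir for routing $b^t_\Lambda$ far from the stabilization zigzag, which secures the disjointness condition needed for both bypasses to lie in the same contractible component predicted by Corollary \ref{coro:middle-OT} (and ultimately by the Bypass in the Middle Theorem \ref{teo:bypassMiddle}).
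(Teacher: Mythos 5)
Your strategy---invert $S^t_\pm$ by a destabilization operator whose existence comes from the overtwisted disk (Lemma \ref{lem:bypassAttachingArc}) and whose ambiguity is killed by Corollary \ref{coro:middle-OT}---is exactly the mechanism of the paper's proof, and the two verifications are sound in substance: $D^t_+\circ S^t_+\simeq\mathrm{id}$ because the tautological stabilization bypass is one admissible choice of destabilizing bypass, and $S^t_+\circ D^t_+\simeq\mathrm{id}$ holds already by the local model defining destabilization (replacing the attaching arc by the Honda arc produces a $\beta'$ with $S^t_+(\beta')=\beta$), not by a second appeal to contractibility as your third paragraph suggests.

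The gap is the step you yourself call ``the main obstacle'' and then do not carry out. Contractibility of the space of destabilizing bypasses for a single $\Lambda$ does not produce a continuous map $D^t_+$ on the infinite-dimensional space $\Leg(M\setminus\Delta_{\OT},\xi)$; for a weak homotopy equivalence one must instead work with compact families and solve a \emph{relative} extension problem. Concretely, for injectivity on $\pi_k$ one is handed $\partial\tilde{\Phi}:\partial\D^{k+1}\to\Leg(M\setminus\Delta_{\OT},\xi)$ together with a capping disk $\Phi$ of $S^t_+\circ\partial\tilde{\Phi}$, and one must extend the tautological family of destabilizing bypasses from $\partial\D^{k+1}$ over all of $\D^{k+1}$ while keeping the attaching arc on $\Phi(z)$ and the bypass disjoint from $\Phi(z)$ and from $\Delta_{\OT}$. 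This is where the actual work lies; the paper does it by transporting the boundary family of bypasses across the disk with the Contact Isotopy Extension Theorem and then observing that the resulting sphere of bypasses over the center, having fixed attaching arc and overtwisted complement, is contractible by Corollary \ref{coro:middle-OT}. Your write-up needs this (or an equivalent section-of-a-fibration-with-contractible-fibers argument over each compact family). A smaller point: your final paragraph tries to route $b^t_\Lambda$ away from the stabilization zigzag so that the two bypasses ``lie in the same contractible component''; this is unnecessary, since Corollary \ref{coro:middle-OT} gives contractibility of the whole space of bypasses with that attaching arc in $M\setminus\Delta_{\OT}$ unconditionally---the disjoint ``bypass in the middle'' built from $\Delta_{\OT}$ is already internal to its proof.
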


Here, the stabilization operators are defined as Legendrian satellite maps following \cite{FMP24}. These operators, introduced in Section \ref{subsec:stabilizations}, act on each Legendrian by adding a positive (or negative) stabilization at time $t$, up to Legendrian isotopy.

Recall that a \em formal Legendrian embedding \em into a contact $3$-manifold $(M,\xi)$ is a pair $(\beta, F_s)$ where 
\begin{itemize}
    \item $\beta:\NS^1\hookrightarrow M$ is a smooth embedding,
    \item $F_s:T\NS^1\rightarrow \beta^*TM$, $s\in[0,1]$, is a homotopy of bundle monomorphisms such that $F_0=d\beta$ and $\image(F_1)\subseteq \beta^*\xi$.
\end{itemize}

The space of formal Legendrian embeddings into $(M,\xi)$ is denoted by $\FLeg(M,\xi)$. We will use Theorem \ref{thm:StabilizationMapsLooseLegendrians} together with the parametric Fuchs-Tabachnikov Theorem \cite{CasalsDelPino,FuchsTabachnikov,Murphy} to show that

\begin{corollary}\label{cor:LooseLegendrians}
    Let $(M,\xi)$ be an overtwisted contact $3$-manifold and $\Delta_{\OT}\subseteq (M,\xi)$ an overtwisted disk. Then, the natural inclusion 
    $$ \Leg(M\backslash\Delta_{\OT},\xi)\hookrightarrow \FLeg(M\backslash \Delta_{\OT},\xi) $$
    is a weak homotopy equivalence.
\end{corollary}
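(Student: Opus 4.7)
The plan is to combine Theorem \ref{thm:StabilizationMapsLooseLegendrians} with the parametric Fuchs-Tabachnikov theorem \cite{CasalsDelPino,Murphy}. First, every Legendrian in $(M\backslash\Delta_{\OT},\xi)$ has overtwisted complement, hence is loose, so the parametric Fuchs-Tabachnikov theorem applies. In the form useful here, it states that (i) any two maps $f_0,f_1\colon K\to\Leg(M\backslash\Delta_{\OT},\xi)$ from a compact CW complex $K$ that become homotopic through $\FLeg(M\backslash\Delta_{\OT},\xi)$ satisfy $S^N_+S^N_-\circ f_0\simeq S^N_+S^N_-\circ f_1$ in $\Leg(M\backslash\Delta_{\OT},\xi)$ for some $N$, and (ii) every map $K\to\FLeg(M\backslash\Delta_{\OT},\xi)$ admits, after applying a formal version of $S^N_+S^N_-$, a lift through the inclusion $i$.

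With this in hand I would show that $i_*\colon\pi_n(\Leg(M\backslash\Delta_{\OT},\xi))\to\pi_n(\FLeg(M\backslash\Delta_{\OT},\xi))$ is a bijection for every $n$. For injectivity, if $i_*[\psi_0]=i_*[\psi_1]$, then (i) produces $N$ with $(S^N_+S^N_-)_*[\psi_0]=(S^N_+S^N_-)_*[\psi_1]$ in $\pi_n(\Leg)$; Theorem \ref{thm:StabilizationMapsLooseLegendrians} asserts that $S^t_\pm$ induce isomorphisms on homotopy groups, and hence $[\psi_0]=[\psi_1]$. For surjectivity, I would first extend $S^t_\pm$ to formal satellite operators $FS^t_\pm$ on $\FLeg(M\backslash\Delta_{\OT},\xi)$ and observe that $FS^t_\pm$ are weak equivalences because a formal zigzag admits a canonical formal unstabilization in the space of bundle monomorphisms. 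Given $[\phi]\in\pi_n(\FLeg)$, choose $[\phi']$ with $(FS^N_+FS^N_-)_*[\phi']=[\phi]$, and apply (ii) to $\phi'$ to obtain $[\psi]\in\pi_n(\Leg)$ with $i_*[\psi]=[\phi]$.

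The step I expect to require the most care is verifying that the formal satellite operators $FS^t_\pm$ genuinely extend $S^t_\pm$ compatibly with $i$ and are themselves weak homotopy equivalences on $\FLeg(M\backslash\Delta_{\OT},\xi)$; once this is in place, the argument reduces to the diagram chase sketched above, with Theorem \ref{thm:StabilizationMapsLooseLegendrians} supplying the inversion on the Legendrian side needed to absorb the residual stabilizations coming from the parametric Fuchs-Tabachnikov theorem.
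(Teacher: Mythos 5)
Your argument has the same skeleton as the paper's proof: injectivity is handled identically (the parametric Fuchs--Tabachnikov theorem kills the class after finitely many stabilizations, and Theorem \ref{thm:StabilizationMapsLooseLegendrians} lets you remove them), and surjectivity is the same diagram chase in which the residual stabilizations produced by Theorem \ref{teo:Fuchs-Taba} are absorbed by inverting $S^t_\pm$ on the Legendrian side. The genuine difference is where the compatibility between genuine and formal stabilization is placed. You introduce formal satellite operators $FS^t_\pm$ on $\FLeg(M\backslash\Delta_{\OT},\xi)$, require $FS^t_\pm\circ i\simeq i\circ S^t_\pm$, and cancel the stabilizations by the injectivity of $(FS^t_\pm)_*$ on homotopy groups. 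The paper never formalizes the stabilization operator; instead it proves a geometric lemma asserting that two destabilizations of the same family of Legendrian arcs by two families of bypasses with the same attaching arc are formally Legendrian isotopic, which pins down the formal class of the destabilized class $B$ and gives $\pi_k(i)(B)=A$ directly. Both routes reduce to the same underlying fact --- that (de)stabilization acts on formal data by a computable, homotopically invertible operation --- and yours is cleaner to state, at the cost of actually constructing $FS^t_\pm$ on all of $\FLeg$ (a formal Legendrian has no front projection or standard neighborhood, so the satellite description does not apply verbatim) and verifying that the induced modification of the path of bundle monomorphisms is invertible up to homotopy. You correctly flag this as the delicate step, but as written it remains an assertion, and it is precisely the analogue of the lemma the paper proves.

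Two smaller points to tighten. First, in your surjectivity step the integer $N$ furnished by Theorem \ref{teo:Fuchs-Taba} depends on the map it is applied to, so choosing $\phi'$ with $(FS^N_+FS^N_-)_*[\phi']=[\phi]$ \emph{before} invoking the theorem on $\phi'$ leaves the two occurrences of $N$ unmatched. The fix is standard: apply the theorem to $\phi$ itself to get $i_*[\psi']=(FS^N)_*[\phi]$, destabilize $[\psi']=(S^N)_*[\psi]$ using Theorem \ref{thm:StabilizationMapsLooseLegendrians}, and then cancel $(FS^N)_*$ by its injectivity to conclude $i_*[\psi]=[\phi]$; this removes the need for $\phi'$ altogether. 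Second, $S^t_\pm$ does not fix the base Legendrian, so the identifications of homotopy groups at different base points should be tracked (the paper notes this in passing); it is bookkeeping, but it belongs in a complete write-up.
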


This result can be proved (in all dimensions) by using the $h$-principle for overtwisted contact structures \cite{BEM,EliashbergOT}, e.g. \cite{CardonaPresas,EliashbergCieliebak}. Notice that the result cannot be improved in general: there exist loose Legendrians which are formally isotopic but not Legendrian isotopic \cite{VogelOvertwisted}. On the other hand, if the underlying $3$-manifold is strongly overtwisted two formally isotopic loose Legendrians are Legendrian isotopic and there is a complete $h$-principle for the subclass of strongly loose Legendrians \cite{F-SOT}.

\subsection{Outline} The article is organized as follows. In Section \ref{sec:Prelim}, we review the basic concepts of contact topology that will be used throughout this manuscript. In Section \ref{sec:BypassInTheMiddle}, we prove that the space of trivial bypasses is contractible (Theorem \ref{thm:TrivialBypasses}) and use this result to establish the main theorem, Theorem \ref{teo:bypassMiddle}. Finally, in Section \ref{sec:Applications}, we prove all the applications of Theorem \ref{teo:bypassMiddle} for overtwisted contact 3-manifolds mentioned in the Introduction.

\textbf{Acknowledgments:} We are grateful to \'Alvaro del Pino and Fran Presas for insightful discussions over the years, as well as to John Etnyre, Fabio Gironella, and Hyunki Min for their interest, questions, and crucial comments on this work. The second-named author would like to thank Emmy Murphy for a helpful conversation regarding this work, and Gordana Mati\'c for her valuable comments and constant encouragement. During part of this project, EF was partially supported by an AMS-Simons Travel Grant.

\section{Preliminaries}\label{sec:Prelim}

In this Section, we review the relevant concepts from contact topology that will be used throughout the article. In \ref{subsec:Giroux}, we briefly review convex surface theory, and in \ref{subsec:bypasses}, we discuss bypasses in contact $3$-manifolds. In \ref{subsec:stabilizations}, we recall the relationship between bypasses and stabilizations, as well as the formal definition of the stabilization operator and the Fuchs-Tabachnikov result. Finally, in \ref{subsec:contactHamiltonian}, we review contact Hamiltonians. For a more detailed account of these topics, the reader is referred to \cite{Geiges:Book,Honda,Massot:Notes}.

\subsection{Convex surface theory}\label{subsec:Giroux}

Recall that a \em contact vector field \em in a contact $3$-manifold $(M,\xi)$ is a vector field whose flow preserves the contact structure; i.e. is given by contactomorphisms. An embedded surface $\Sigma \subseteq (M,\xi)$ is said to be \em convex \em if there exists a contact vector field $X$ transverse to it. Similarly, an embedding $e:\Sigma\rightarrow (M,\xi)$ is said to be \em convex \em if $e(\Sigma)$ is convex. By \em Giroux Genericity Theorem \em every surface (with possibly non-empty Legendrian boundary) can be $C^\infty$-approximated ($C^0$-approximated near the boundary and relative to the boundary) by a convex surface. 

The \em characteristic foliation \em of $\Sigma\subseteq (M,\xi)$ is the oriented singular $1$-dimensional foliation $\mathcal{F}$ determined by $T\Sigma\cap \xi$. For embeddings we will also denote by $\mathcal{F}$ to the pull-back of the foliation determined by $Te(\Sigma)\cap \xi$. The \em dividing set \em of a convex surface $\Sigma\subseteq (M,\xi)$ associated to the contact vector field $X$ is given by a family of embedded transverse curves $\Gamma_X:=\{p\in\Sigma: X(p)\in\xi_p\}$. Since the space of contact vector fields transverse to $\Sigma$ is contractible the isotopy class of $\Gamma_X$ is independent of $X$ and we will simply write $\Gamma$ instead of $\Gamma_X$. The convexity of $\Sigma\subseteq (M,\xi)$ is equivalent to the existence of an \em $I$-invariant neighborhood \em of $\Sigma$; i.e. contactomorphic to $(\Sigma\times I,\xi_{inv}=\ker(fdt+\beta))$, where $t\in I:=[0,1]$, $f\in C^\infty(\Sigma)$ and $\beta\in \Omega^1(\Sigma)$. In this coordinates, the dividing set of $\Sigma$ is simply $\Gamma=f^{-1}(0)$.

The contact condition implies that the characteristic foliation of a surface completely determined the contact germ near the surface: 

\begin{theorem}[Giroux]\label{teo:giroux-entorno}
    Let $\Sigma_i\subseteq (M_i,\xi_i)$ be an embedded compact surface in contact 3–manifold with characteristic foliation $\mathcal{F}_i$, $i\in \{0,1\}$.
    Assume that there is a diffeomorphism $\phi: \Sigma_0 \to \Sigma_1$ such that $\phi_* \mathcal{F}_0=\mathcal{F}_1$.
    Then, there is a contactomorphism $\Phi: (\Op(\Sigma_0),\xi_0) \to (\Op(\Sigma_1),\xi_1)$  with $\Phi(\Sigma_0) = \Sigma_1$ and such that 
    $\Phi_{|\Sigma_0}$ is isotopic to $\phi$ via an isotopy preserving the characteristic foliation.   
\end{theorem}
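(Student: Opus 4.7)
\textbf{Proof plan for Theorem \ref{teo:giroux-entorno}.}

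The plan is to reduce to a Moser-type argument on a single manifold. First, use the diffeomorphism $\phi:\Sigma_0\to\Sigma_1$ together with a tubular neighborhood of $\Sigma_1$ in $M_1$ and one of $\Sigma_0$ in $M_0$ to produce a diffeomorphism $\Psi:\Op(\Sigma_0)\to\Op(\Sigma_1)$ with $\Psi|_{\Sigma_0}=\phi$. Pulling back via $\Psi$, the problem becomes: on a single tubular neighborhood $N=\Op(\Sigma_0)\subseteq M_0$ we are given two contact structures, $\xi_0$ and $\xi_0':=\Psi^\ast\xi_1$, both inducing the \emph{same} oriented singular characteristic foliation $\mathcal{F}_0$ on $\Sigma_0$. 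We want a contactomorphism $\Phi_0:(N,\xi_0)\to(N,\xi_0')$ that is the identity on $\Sigma_0$ (up to an isotopy preserving $\mathcal F_0$); then $\Psi\circ\Phi_0$ is the desired $\Phi$.

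Next, I would choose defining contact forms $\alpha_0,\alpha_0'$ for $\xi_0,\xi_0'$ on $N$ and normalize them along $\Sigma_0$. Since the oriented kernels of $\alpha_0|_{T\Sigma_0}$ and $\alpha_0'|_{T\Sigma_0}$ coincide (that is what equality of characteristic foliations means, with the singularity signs determined by $\alpha\wedge d\alpha$), a standard argument — rescaling $\alpha_0'$ by a positive function $h$ chosen so that $h\,\alpha_0'$ agrees with $\alpha_0$ on a vector field transverse to $\xi_0$ along $\Sigma_0$, and noting that the transverse contact line fields match along $\Sigma_0$ — lets me assume $\alpha_0|_{T\Sigma_0}=\alpha_0'|_{T\Sigma_0}$ as $1$-forms on $\Sigma_0$ (not only as line fields). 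The small technical subtlety here is the behavior of $h$ at singular points of $\mathcal{F}_0$, which is handled by working with an $I$-invariant model near $\Sigma_0$ as in Subsection~\ref{subsec:Giroux} and matching the functions $f$ and the $1$-forms $\beta$ along $\{t=0\}$.

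With this normalization in place, I would run Moser's trick. Put $\alpha_s=(1-s)\alpha_0+s\alpha_0'$ for $s\in[0,1]$. Since the contact condition is open and $\alpha_s|_{T\Sigma_0}=\alpha_0|_{T\Sigma_0}$ for all $s$, after shrinking $N$ we may assume each $\alpha_s$ is a contact form. Seek a time-dependent vector field $X_s$ whose flow $\varphi_s$ satisfies $\varphi_s^\ast\alpha_s=\lambda_s\alpha_0$; writing $X_s=H_s R_s+Y_s$ with $R_s$ the Reeb field of $\alpha_s$ and $Y_s\in\xi_s$, the usual contact Moser equations
\begin{align*}
dH_s(R_s)-\dot\alpha_s(R_s) &= \mu_s,\\
\iota_{Y_s}d\alpha_s &= \mu_s\alpha_s-dH_s-\dot\alpha_s,
\end{align*}
have a unique solution, with $\mu_s=\dot\lambda_s/\lambda_s$. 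The crucial observation is that $\dot\alpha_s=\alpha_0'-\alpha_0$ vanishes on $T\Sigma_0$; this forces $H_s$ and $Y_s$ to vanish on $\Sigma_0$, so $X_s|_{\Sigma_0}=0$ and $\varphi_s$ fixes $\Sigma_0$ pointwise.

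Finally, $\Phi_0:=\varphi_1$ is a contactomorphism from $\xi_0$ to $\xi_0'$ on $\Op(\Sigma_0)$ fixing $\Sigma_0$ pointwise, and $\Phi:=\Psi\circ\Phi_0$ is the sought contactomorphism. Since $\Phi|_{\Sigma_0}=\phi$ on the nose in this construction, the isotopy clause of the theorem is automatic (take the constant isotopy), or more generally it arises trivially from interpolating $\Psi|_{\Sigma_0}$ with $\phi$ when one prefers to use a less canonical tubular extension. I expect the main obstacle to be the rescaling step that achieves $\alpha_0|_{T\Sigma_0}=\alpha_0'|_{T\Sigma_0}$, because the positive function $h$ must be shown to extend smoothly across the singular set of $\mathcal F_0$; this is handled by working in Giroux's $I$-invariant normal form, where the matching of characteristic foliations becomes a matching of germs of $(f,\beta)$ along $\Sigma_0$ up to positive rescaling, which is straightforward to smooth.
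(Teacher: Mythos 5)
The paper does not prove this statement; it records it as a classical theorem of Giroux (cf.\ Geiges' book, Theorem 2.5.22), so there is no in-paper proof to compare against. Your proposal is the standard argument for it: extend $\phi$ to a diffeomorphism of tubular neighborhoods, rescale so that the two contact forms agree on $T\Sigma_0$, and run Moser's trick on the linear interpolation. That is the right route, and most steps are sound — including the point, worth making explicit, that once $\alpha_s|_{T\Sigma_0}$ is independent of $s$, the $3$-form $\alpha_s\wedge d\alpha_s$ at points of $\Sigma_0$ is \emph{linear} in $s$ (all quadratic terms involve only the frozen restriction to $T\Sigma_0$), so the interpolation is contact along $\Sigma_0$ and hence, by compactness of $\Sigma_0\times[0,1]$, on a neighborhood.

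One claim in your Moser step is, however, wrong: $\dot\alpha_s=\alpha_0'-\alpha_0$ vanishes on $T\Sigma_0$ but \emph{not} as a covector on $TM|_{\Sigma_0}$ — its normal component is in general nonzero, since at a nonsingular point $p\in\Sigma_0$ the planes $\ker\alpha_0(p)$ and $\ker\alpha_0'(p)$ differ even though they cut out the same line of $T_p\Sigma_0$. Consequently $Y_s$ does \emph{not} vanish along $\Sigma_0$. Taking $H_s\equiv 0$ (your first equation does not force this; it is a choice, with $\mu_s:=\dot\alpha_s(R_s)$), what is true is that $\iota_{Y_s}d\alpha_s$ annihilates the characteristic line $\xi_s(p)\cap T_p\Sigma_0$, so by nondegeneracy of $d\alpha_s|_{\xi_s}$ the vector $Y_s(p)$ lies in that line (and vanishes at singular points, where $\xi_s(p)=T_p\Sigma_0$). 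The flow therefore preserves $\Sigma_0$ only setwise, sliding points along the leaves of $\mathcal{F}_0$ — which is precisely why the theorem is stated with ``$\Phi_{|\Sigma_0}$ isotopic to $\phi$ via an isotopy preserving the characteristic foliation'' rather than $\Phi_{|\Sigma_0}=\phi$; the pointwise conclusion you claim is not attainable in general. The proof survives once this is corrected, since the weaker conclusion is all that is asserted. A secondary point: your proposed fix for the division step at singular points via the $I$-invariant model is off target, as the surface here is not assumed convex; the correct tool is the elementary division lemma for $1$-forms with nondegenerate zeros, the nondegeneracy $d\beta\neq 0$ at a singular point being exactly what the contact condition supplies there.
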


Moreover, when the surface is also convex the contact germ is determined, after a small isotopy, by the dividing set, this is known as \em Giroux Realization Theorem. \em To state this result let us first introduce some notation. We write $\Emb(\Sigma, (M,\xi), \mathcal{F})$ to denote the space of embeddings $e : \Sigma\to M$ with characteristic foliation $\mathcal{F}$. Similarly, we will denote by $\Emb(\Sigma, (M, \xi), \Gamma)$ is the space of convex embeddings $e : \Sigma\to M$ with dividing set $\Gamma$. 

\begin{theorem}[Giroux Realization Theorem \cite{GirouxSurfaces}]\label{teo:realizacion-Giroux}
    Let $e:\Sigma\to (M,\xi)$ be a convex embedding with characteristic foliation $\mathcal{F}$ and dividing set $\Gamma$. Then, the natural inclusion
    $i:\Emb(\Sigma, (M,\xi), \mathcal{F})\to \Emb(\Sigma, (M, \xi), \Gamma)$
    is a weak homotopy equivalence.
\end{theorem}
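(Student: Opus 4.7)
The plan is to prove a relative parametric version of Giroux's realization result: for any compact CW pair $(K, K_0)$ and family $\{e_\lambda\}_{\lambda \in K} \subseteq \Emb(\Sigma,(M,\xi),\Gamma)$ which over $K_0$ already lies in $\Emb(\Sigma,(M,\xi),\mathcal{F})$, produce a homotopy of the family inside $\Emb(\Sigma,(M,\xi),\Gamma)$, constant over $K_0$, ending in $\Emb(\Sigma,(M,\xi),\mathcal{F})$. Applied to the pairs $(D^n,\partial D^n)$ for each $n\geq 0$, this gives both surjectivity and injectivity of $\pi_n i$, hence the weak equivalence.

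First I would set up an $I$-invariant local model depending continuously on $\lambda$. For each $\lambda \in K$, choose an $I$-invariant neighborhood $\mathcal{N}_\lambda \cong (\Sigma \times [-1,1], \ker(f_\lambda\,dt + \beta_\lambda))$ of $e_\lambda(\Sigma)$ with $f_\lambda^{-1}(0) = \Gamma$. Since the space of contact vector fields transverse to a given convex surface is contractible and the space of $I$-invariant extensions with prescribed dividing set $\Gamma$ is contractible, these data can be arranged to vary continuously in $\lambda$ by a standard cellular induction on $K$, shrinking the neighborhoods as needed. In such coordinates any graph $t = u(x)$ is automatically a convex surface with dividing set $\Gamma$ — the contact vector field $\partial_t$ is transverse and lies in $\xi$ precisely where $f_\lambda = 0$ — and its characteristic foliation is $\ker(f_\lambda\, du + \beta_\lambda)$ on $\Sigma$.

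Then I would invoke Giroux's realization argument parametrically: given an adapted foliation $\mathcal{F}'$, a Moser-type construction (integrating a vector field tangent to a one-parameter family of defining $1$-forms) produces a function $u$ on $\Sigma$ such that $\ker(f_\lambda\, du + \beta_\lambda) = \mathcal{F}'$, continuously in all inputs. Applying this to a canonical path from the characteristic foliation of $e_\lambda$ to $\mathcal{F}$ (obtained by interpolating defining forms vanishing along $\Gamma$) yields a continuous family $\{u_\lambda^s\}$ with $u_\lambda^0 \equiv 0$ and the graph of $u_\lambda^1$ realizing $\mathcal{F}$. The homotopy $e_\lambda^s(x) := (x, u_\lambda^s(x)) \in \mathcal{N}_\lambda$ stays in $\Emb(\Sigma,(M,\xi),\Gamma)$ by the local model, ends in $\Emb(\Sigma,(M,\xi),\mathcal{F})$, and is constant over $K_0$ because the interpolation of foliations is trivial there.

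The main obstacle is the continuity of the $I$-invariant trivializations in the parameter and their compatibility across cells of $K$: two different choices of transverse contact vector field, or of projection $\mathcal{N}_\lambda \to \Sigma$, produce different graph representations of the same family. This is handled by the contractibility of the space of $I$-invariant germs extending a given convex embedding with fixed dividing set $\Gamma$, which itself follows from the convexity of the set of admissible pairs $(f,\beta)$ subject to the $I$-invariant contact condition and $f^{-1}(0) = \Gamma$. With that input, the cellular induction closes and the homotopy globalizes to all of $K$.
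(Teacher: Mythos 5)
The paper does not prove this statement: it is quoted as Giroux's Realization Theorem and attributed to \cite{GirouxSurfaces}, so there is no in-paper argument to compare against. Judged on its own terms, your outline follows the standard route (reduce to an $I$-invariant model $\ker(f\,dt+\beta)$, realize adapted foliations there, globalize over the parameter space by contractibility of the auxiliary choices), and that architecture is correct. But two of your key steps are wrong as stated.

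First, the realization step. You claim that for any foliation $\mathcal{F}'$ divided by $\Gamma$ one can find a function $u$ with $\ker(f\,du+\beta)=\mathcal{F}'$. This is false: at a point of $\Gamma=f^{-1}(0)$ the pulled-back form $f\,du+\beta$ equals $\beta$ regardless of $u$, so every graph in the invariant model induces the \emph{same} characteristic foliation along $\Gamma$, whereas a general foliation divided by $\Gamma$ only has to be transverse to $\Gamma$ there. Graphs alone do not exhaust the adapted foliations. The actual argument interpolates the defining one-form, $\beta_s=(1-s)\beta_0+s\beta_1$ with a \emph{common} $f$ (which requires a normalization lemma), obtaining a path of invariant contact structures $\ker(f\,dt+\beta_s)$, and then converts this into an ambient isotopy of the fixed contact manifold via Gray stability, checking that the moved surfaces stay transverse to $\partial_t$ and that the dividing set is preserved. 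Second, your justification in the last paragraph — convexity of the set of pairs $(f,\beta)$ satisfying the invariant contact condition — does not hold: that condition is $f\,d\beta+\beta\wedge df>0$, which is bilinear in $(f,\beta)$, so the set of admissible pairs is not convex; what is true, and what Giroux uses, is that for fixed $f$ the condition is linear in $\beta$, hence the set of admissible $\beta$ is convex. Both points are repairable by following Giroux's argument, but as written the middle of your proof does not go through.
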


A particular instance of realization that we will use is the following result due to Fraser.  
\begin{lemma}[Pivot Lemma]\label{lem:pivot}
Let $e: \D^2 \rightarrow (M, \xi)$ be an embedding with a characteristic foliation that consists of a single positive elliptic singularity $p$ and unstable orbits from $p$ that exit transversely through $e(\partial \D^2)$. Let $\delta_1$ and $\delta_2$ be two unstable orbits meeting at $p$, and $\delta_i \cap e(\partial \D^2) = p_i$. Then, after a $C^{\infty}$–small perturbation of $e$ that fixes $\partial \D^2$, there exists an embedding $e'$ whose characteristic foliation has exactly one positive elliptic singularity $p'$ and unstable orbits from $p'$ that exit transversely through $\partial \D^2$, and such that the orbits passing through $p_1$ and $p_2$ meet tangentially at $p$.
\end{lemma}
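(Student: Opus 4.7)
I would prove the lemma by a local $C^\infty$-small perturbation of $e$, supported in an arbitrarily small neighborhood of $p$ so that $\bd\D^2$ is automatically fixed. First, I would apply Darboux's theorem to obtain coordinates $(x,y,z)$ near $p$ in which $p$ is the origin and $\xi=\ker(dz-y\,dx)$. Since $p$ is a singularity of the characteristic foliation we have $T_p e(\D^2)=\xi_p$, so near $p$ the image of $e$ is a graph $z=h(x,y)$ with $h(0)=0$ and $\nabla h(0)=0$. A direct calculation gives that the characteristic foliation is generated on the $(x,y)$-plane by
\[
X_h \;=\; h_y\,\partial_x - (h_x-y)\,\partial_y,
\]
and its linearization at the origin has trace $1$, independent of $h$; the eigenvalues and eigendirections depend only on the Hessian of $h$ at $0$ (they are encoded by the three free entries $h_{xx}(0), h_{xy}(0), h_{yy}(0)$).

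Next, I would fix a bump function $\chi$ equal to $1$ on a small disk around the origin and supported in a slightly larger one, and set $h'=h+\epsilon\,\chi\cdot Q$ for a small $\epsilon>0$ and a quadratic form $Q$ to be chosen. Then $X_{h'}=X_h$ outside the support of $\chi$, so the global foliation is perturbed $C^\infty$-slightly, and the Hessian of $h'$ at $0$ is shifted by $\epsilon$ times the Hessian of $Q$. Choosing $Q$ appropriately, I can arrange that the linearization of $X_{h'}$ at $0$ is an unstable node with distinct positive real eigenvalues $0<\lambda_1<\lambda_2$ (of sum $1$) and fast eigendirection $v_2$ chosen generically, in particular transverse to the approach directions of the orbits through $p_1$ and $p_2$. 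For such a node, every orbit other than the two lying on the $v_2$-axis exits the origin tangent to the slow eigendirection $v_1$; hence the orbits of $X_{h'}$ through $p_1$ and $p_2$, which are $C^0$-close to $\delta_1,\delta_2$ and avoid the exceptional $v_2$-axis, both meet $p$ tangent to $v_1$ — that is, tangentially, as required.

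The main point to verify, and where I expect the mild technical work, is the global qualitative structure of the perturbed foliation: that $p$ remains the only singularity, that the orbits of $X_{h'}$ through $p_1$ and $p_2$ still backtrack all the way to $p$, and that every orbit continues to cross $\bd\D^2$ transversely. All three follow from the observations that $X_{h'}$ agrees with $X_h$ outside the small support of $\chi$, that $p$ stays a nondegenerate source (the trace of the linearization is still $1$), and that for $\epsilon$ small enough $X_{h'}$ is $C^\infty$-close to $X_h$ globally, so that the phase portrait is preserved outside a small neighborhood of $p$ and transversality at $\bd\D^2$ is open.
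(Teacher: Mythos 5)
The paper does not actually prove this lemma; it is quoted as a known result of Fraser (see also Etnyre--Honda, \emph{Knots and contact geometry I}), and your argument is essentially the standard proof of it: realize the disk near $p$ as a graph $z=h(x,y)$ in Darboux coordinates, note that the linearization of the directing vector field has trace $1$ and that its determinant and eigenframe are controlled by the Hessian of $h$ at $p$, and perturb $h$ by a compactly supported quadratic term so that $p$ becomes a nondegenerate source node with distinct eigenvalues, whence all but two orbits reach $p$ tangent to the slow eigendirection. Your computation of $X_h$ and of the trace is correct, and the three global points you flag (no new singularities, backward orbits from $p_1,p_2$ still limit to $p$, transversality at $\bd\D^2$) are handled exactly as you indicate. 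The only step I would tighten is the genericity claim: what must be avoided is that the (new) orbit through $p_1$ or $p_2$ coincides with one of the two strong-unstable separatrices, and the cleanest way to see this is that these two separatrices exit a small circle around $p$ near the $\pm v_2$ directions and move continuously as one rotates the eigenframe of $Q$, while the points where the backward orbits from $p_1,p_2$ enter that circle are fixed (the foliation is unchanged outside $\support(\chi)$); so a generic rotation of $v_2$ does the job.
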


The last foundational result of Giroux rules out the appearance of several configurations of dividing sets in tight contact $3$-manifolds:

\begin{theorem}[Giroux Tightness Criteria\cite{GirouxCircleBundles}]\label{teo:Criterio-Giroux}
    Let $\Sigma\subseteq (M,\xi)$ be a convex surface with possibly empty Legendrian boundary. Then, an $I$-invariant neighborhood of $\Sigma$ is tight if and only if one of the following two conditions holds
    \begin{enumerate}[label=(\roman*)]
        \item $\Sigma=\NS^2$ and the dividing set $\Gamma$ is connected, or
        \item $\Sigma\neq \NS^2$ and the dividing set does not contain homotopically trivial curves.
    \end{enumerate}
\end{theorem}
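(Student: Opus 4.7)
The plan is to prove both implications separately, using Giroux's Realization Theorem (Theorem \ref{teo:realizacion-Giroux}) to convert dividing-curve data into explicit characteristic foliations whose overtwisted or tight nature can be read off geometrically.

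\emph{Necessity (failure of the criterion $\Rightarrow$ overtwistedness).} Suppose the criterion fails. In both failure modes I can extract an innermost homotopically trivial component $\gamma \subseteq \Gamma$ bounding a disk $D \subseteq \Sigma$ with $D \cap \Gamma = \gamma$: when $\Sigma = \NS^2$ and $\Gamma$ is disconnected, every component bounds two disks, and I choose an innermost one; when $\Sigma \neq \NS^2$ and $\Gamma$ contains a homotopically trivial component, the innermost such component works directly. I would then invoke the Realization Theorem to isotope the characteristic foliation of $\Sigma$, keeping $\Gamma$ fixed, so that on some slightly smaller disk $D' \subset \interior(D)$ the foliation exhibits a single positive elliptic singularity at the center whose unstable orbits limit on the closed leaf $\partial D'$. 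This displays $D'$ as an overtwisted disk sitting inside the $I$-invariant neighborhood of $\Sigma$, hence the neighborhood is overtwisted.

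\emph{Sufficiency.} For $\Sigma = \NS^2$ with $\Gamma$ a single circle, I would use the Realization Theorem to adjust the foliation to match that of a standard convex sphere in $(\R^3, \xi_{\std})$; Theorem \ref{teo:giroux-entorno} then identifies the $I$-invariant neighborhood with a ball in $(\R^3, \xi_{\std})$, which is tight by Bennequin's inequality. For $\Sigma$ of positive genus with every component of $\Gamma$ essential, I would realize the $I$-invariant neighborhood as a contact subset of a manifestly (universally) tight closed contact manifold; the cleanest ambient candidate is the unit cotangent bundle $ST^*\Sigma$ with its canonical contact structure, whose tightness can be certified independently via its universal cover. The essential curves in $\Gamma$ would be matched to a suitable Legendrian configuration on the zero section in $ST^*\Sigma$, producing a convex neighborhood with exactly the prescribed dividing set (up to isotopy), and then Theorem \ref{teo:realizacion-Giroux} transfers tightness back to the original neighborhood.

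\emph{Main obstacle.} The delicate step is sufficiency in the positive-genus case: faithfully embedding an \emph{arbitrary} essential dividing configuration into a tight model without circular reasoning. The isotopy class of $\Gamma$ can be arbitrarily complicated (many parallel copies, or a collection that fills $\Sigma$), and one must match the full dividing data simultaneously. I would handle this by cutting $(\Sigma,\Gamma)$ along auxiliary essential arcs until each piece is a disk meeting $\Gamma$ in a single arc — each such piece carries a standard tight local model, namely an $I$-invariant neighborhood of a half-disk with boundary a Legendrian arc inside a standard ball — and then reassembling by convex gluing. The care needed is to ensure that every tightness input used (tightness of the ball, of standard neighborhoods of Legendrian arcs) is logically independent of the criterion being proved.
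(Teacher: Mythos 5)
The paper does not prove this statement; it is quoted as background from \cite{GirouxCircleBundles}, so your proposal can only be judged against the standard proofs in the literature.

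Your necessity direction is correct and is the standard argument: an innermost homotopically trivial component of $\Gamma$ bounds a disk whose interior misses $\Gamma$, and Giroux realization (Theorem \ref{teo:realizacion-Giroux}) lets you install, among the foliations divided by $\Gamma$, one with an elliptic point whose orbits limit on a closed leaf, i.e.\ an overtwisted disk. The sphere case of sufficiency is also fine: realize the foliation of the standard convex $\NS^2\subseteq(\R^3,\xi_{\std})$ and invoke Theorem \ref{teo:giroux-entorno} plus Bennequin.

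The genuine gap is the positive-genus sufficiency, exactly where you flag the difficulty. First, the proposed model is not available as stated: $ST^*\Sigma\to\Sigma$ has nonzero Euler class for $g\neq 1$, so $\Sigma$ does not sit inside it as a section, and there is no ``zero section'' in $ST^*\Sigma$ to carry your Legendrian configuration; moreover you give no mechanism for realizing an \emph{arbitrary} essential multicurve $\Gamma$ as the dividing set of a convex copy of $\Sigma$ in that model. Second, the fallback of cutting $(\Sigma,\Gamma)$ into standard tight pieces and ``reassembling by convex gluing'' does not close the argument, because tightness is \emph{not} preserved under gluing along convex surfaces in general (this is the whole content of the convex gluing problem), and the gluing theorems that do apply (Colin, Honda) are themselves downstream of Giroux's criterion, so invoking them here is circular. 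The standard non-circular proofs instead either (a) lift the $I$-invariant structure to $\tilde\Sigma\times\R\cong\R^3$ and show the lift embeds in the standard tight $\R^3$ (Giroux's route, proving universal tightness directly), or (b) embed $\Sigma\times I$ in a closed manifold carrying a taut foliation with $\Sigma$ a leaf, perturb to a contact structure \`a la Eliashberg--Thurston, and conclude tightness from fillability. You would need to replace your gluing step by one of these global arguments.
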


\subsection{Bypasses in contact $3$-manifolds.} \label{subsec:bypasses}
A \em bypass \em is a convex half-disk $D$ cobounded by two Legendrian curves $\gamma$ and $\beta$, which we will call \em attaching arc \em and \em Honda arc\em, respectively. The attaching arc has three elliptic singularities of alternating sign and shares two of them with the Honda arc, which also has a hyperbolic singularity. Finally, the dividing set of a bypass consists on a single embedded arc with end points at the attaching arc.  See Figure \ref{fig:bypass1}. 
A bypass can be \em positive \em or \em negative\em, depending on the sign of the central elliptical singularity. In this article, unless other thing is stated, we will assume that every bypass is positive. However, everything will apply for negative bypasses in the same way.
\begin{figure}[h]
    \centering
    \includegraphics[scale=0.3]{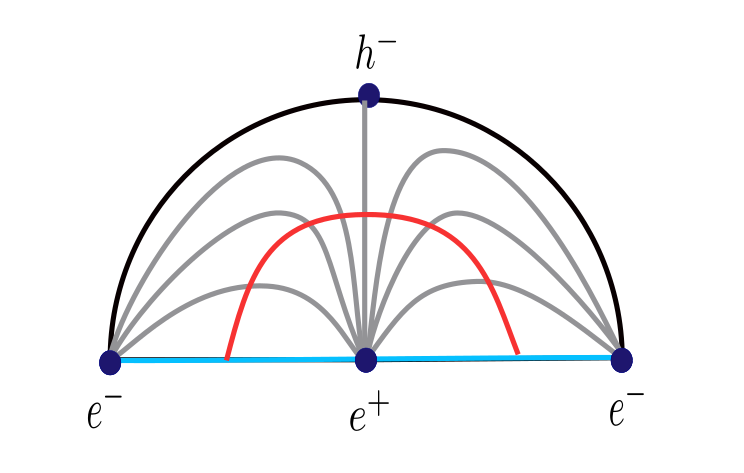}
    \caption{A positive bypass, where we can see in light blue the attaching arc, in black the Honda arc, in red the dividing set and finally in gray the characteristic foliation.}
    \label{fig:bypass1}
\end{figure}

Bypasses are relevant because they describe the failure of convexity in $1$-parametric families of surfaces. More precisely, every smooth isotopy of between two convex surfaces can be assumed to be conformed by a sequences of bypass attachments that we now describe \cite{Colin,HondaGluing,GirouxFoliations}. Given a convex surface $\Sigma$ and a bypass disk $D$ that intersects $\Sigma$ only at the attaching arc $\gamma=\Sigma\cap D$, the neighborhood of $(\Sigma\times I,\xi)=(\overline{\Op(\Sigma\cup D)},\xi)$ is called \em bypass attachment. \em Here, $\Sigma=\Sigma\times\{\varepsilon\}$ for $\varepsilon >0$, $\Sigma\times\{1\}$ is convex and its dividing set can be obtained from the one of $\Sigma$ by following the modification like that in Figure \ref{fig:bypass-2}.
\begin{figure}[h]
    \centering
    \includegraphics[scale=0.3]{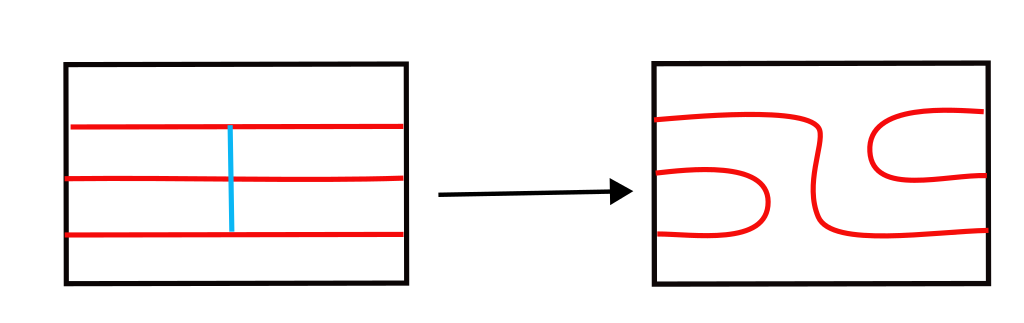}
    \caption{The effect of a bypass attachment.}
    \label{fig:bypass-2}
\end{figure}
Given a Legendrian arc $\gamma\subseteq \Sigma$ intersecting the dividing set of $\Sigma$ at exactly $3$ points we will denote by $\mathcal{B}_\gamma((M,\xi);\Sigma)$ the space of embeddings of bypasses for $\Sigma$ which coincides over a open neighborhood of the attaching arc $\gamma$ and with fixed characteristic foliation. If we write $\mathcal{B}_\gamma(M,\xi)$ we will simply mean the spaces of all bypasses with attaching arc $\gamma$ that coincide near $\gamma$.

Observe that, in general, the space $\mathcal{B}_\gamma((M,\xi);\Sigma)$ could be empty. However, there are two important instances in which this is not the case. The first one, as observed by Vogel \cite{Vogel} (see also \cite{Huang}) is when $(M\backslash \Sigma,\xi)$ is overtwisted: 
\begin{lemma}[\cite{Vogel}]\label{lem:bypassAttachingArc}
    If $M\setminus \Sigma$ is overtwisted and $\gamma\subset \Sigma$ is a possible attaching arc then exists bypass with attaching arc $\gamma$ to $\Sigma$.
\end{lemma}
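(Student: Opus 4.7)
My plan is to exhibit a smoothly embedded half-disk $D\subset M\setminus\Sigma$ with $\partial D\cap \Sigma=\gamma$ whose characteristic foliation equals the bypass foliation $\mathcal{F}_B$, and then invoke Giroux's Neighborhood Theorem \ref{teo:giroux-entorno} to conclude that $D$ is a bypass embedding for $\Sigma$ with attaching arc $\gamma$. This reduces the contact-topological problem to realizing a specific singular foliation on a half-disk, which can be attacked with the Elimination Lemma, the Pivot Lemma (Lemma \ref{lem:pivot}), and the combinatorics of dividing sets.

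To build $D$, I would first construct a germ piece $D_{\mathrm{loc}}$ near $\gamma$: because $\gamma$ is a possible attaching arc (three transverse intersections with $\Gamma_\Sigma$ in the correct alternating pattern), a standard local model inside an $I$-invariant neighborhood of $\Sigma$ extended slightly into $M\setminus\Sigma$ produces a small half-disk on which the characteristic foliation agrees with $\mathcal{F}_B$ in a collar of $\gamma$. In general $D_{\mathrm{loc}}$ cannot be closed up to an honest bypass inside a tight neighborhood of $\Sigma$; closing it up is where I would exploit the overtwistedness of $M\setminus\Sigma$.

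Next, I would fix the overtwisted disk $\Delta_{\OT}\subset M\setminus\Sigma$ together with a Legendrian arc $\alpha\subset M\setminus\Sigma$ from a point of $\partial D_{\mathrm{loc}}\setminus \gamma$ to a point of $\partial \Delta_{\OT}$, otherwise disjoint from $D_{\mathrm{loc}}\cup \Delta_{\OT}\cup \Sigma$. Performing a boundary band sum along $\alpha$ produces a smoothly embedded half-disk $D_1:=D_{\mathrm{loc}}\natural_{\alpha}\Delta_{\OT}$ with $\partial D_1\cap \Sigma=\gamma$. After a $C^\infty$-small perturbation, $D_1$ is convex, and its characteristic foliation already matches $\mathcal{F}_B$ on a collar of $\gamma$.

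The remaining work, and the main obstacle, is to modify the characteristic foliation on the interior of $D_1$ so that it matches $\mathcal{F}_B$ globally. The extra singularities inherited from $\Delta_{\OT}$ and introduced by the band sum must be paired off and cancelled by successive applications of Giroux's Elimination Lemma, while the separatrices leaving the surviving central positive elliptic singularity must be shepherded into the bypass configuration using the Pivot Lemma. The reason this succeeds is exactly the presence of $\Delta_{\OT}$: the limit-cycle/indefinite rotation around $\partial \Delta_{\OT}$ supplies the twisting defect that obstructs closing $D_{\mathrm{loc}}$ in a tight neighborhood, and the signed count of elliptic versus hyperbolic singularities on $D_1$ is such that all unwanted singularities can be paired up for cancellation. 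Once the foliation on $D_1$ has been brought into the form $\mathcal{F}_B$, Theorem \ref{teo:giroux-entorno} delivers the desired bypass embedding with attaching arc $\gamma$.
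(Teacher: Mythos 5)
Your construction is exactly the paper's: it builds the bypass as (a perturbation of) the half-disk cobounded by $\gamma$ and the Legendrian connected sum of $\gamma$ with $\partial\Delta_{\OT}$ along a connecting Legendrian arc, which is precisely your band sum of the local germ $D_{\mathrm{loc}}$ with $\Delta_{\OT}$ along $\alpha$, and your appeal to the zero twisting of $\partial\Delta_{\OT}$ is the same mechanism the paper (following Vogel) relies on. The only caution is in your last step: a signed count of singularities alone does not guarantee that elimination reaches $\mathcal{F}_B$ --- one should instead check that the convexified half-disk has dividing set a single arc meeting $\gamma$ twice (no closed components), which follows from the twisting computation along the new boundary arc; this is exactly what the paper sweeps into the word ``perturbation''.
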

To build the bypass in this Lemma the only necessary data is an overtwisted disk $\Delta_{\OT}\subseteq (M\backslash \Sigma,\xi)$ and a Legendrian curve $\delta:[0,1]\rightarrow (M,\xi)$ such that $\delta(0)\in \gamma\subseteq \Sigma$, $\delta(1)\in \Delta_{\OT}$ and $\delta(0,1)\cap \Sigma=\delta(0,1)\cap \Delta_{\OT}=\emptyset$. The bypass is given by (a perturbation) of the half-disk cobounded by $\gamma$ and the Legendrian connected sum of $\gamma$ and $\partial \Delta_{\OT}$ following $\beta$.

The second case in which the existence of bypasses is guaranteed is for \em trivial bypasses. \em This is known as \em Right to Life Principle \em in the literature \cite{HondaGluing,HondaKazezMatic}. We say that a bypass is \em trivial \em for $\Sigma$ if the effect of attaching the bypass does not change the isotopy class of the dividing set. A Legendrian arc $\gamma\subseteq \Sigma$ intersecting the dividing set at precisely $3$ points is said to be a \em trivial attaching arc \em if every (abstract) bypass for $\Sigma$ along $\gamma$ is trivial. 
\begin{figure}[h]
        \centering
        \includegraphics[scale=0.3]{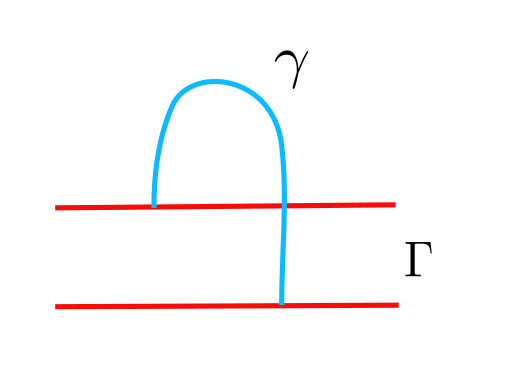}
        \caption{Trivial bypass attachment}
        \label{fig:bypass-trivial}
\end{figure}

\begin{lemma}[Honda]\label{lem:trivial}
    Let $(M,\xi)$ be a contact $3$-manifold, $\Sigma$ be a convex surface with possibly empty Legendrian boundary and $\gamma \subset \Sigma$ be a trivial attaching arc. Then, there exists a trivial bypass $D$ with attaching arc $\gamma$. Moreover, the neighborhood $(\overline{\Op(\Sigma \cup D)},\xi)$ is contactomorphic to the standard $I$-invariant neighborhood $(\Sigma\times I,\xi_{inv})$.
\end{lemma}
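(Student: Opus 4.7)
The plan is to first construct a candidate bypass disk $D$ explicitly inside an $I$-invariant neighborhood of $\Sigma$, and then use Giroux's tools to identify $\overline{\Op(\Sigma\cup D)}$ with the model $(\Sigma\times I,\xi_{inv})$.

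For the construction of $D$, I would identify a neighborhood of $\Sigma$ with $(\Sigma\times I',\xi_{inv})$ using the $I$-invariant coordinates of \S\ref{subsec:Giroux}, so that $\Gamma$ is carried by the zero set of the function $f$. Unpacking the definition, the triviality of $\gamma$ means that one of the two sub-arcs of $\gamma$ between consecutive intersections with $\Gamma$ cobounds, together with an arc of $\Gamma$, an embedded half-disk $R\subseteq \Sigma$ whose interior is disjoint from $\Gamma$. I would apply Giroux Realization (Theorem \ref{teo:realizacion-Giroux}) together with the Pivot Lemma (Lemma \ref{lem:pivot}) near $R$ to put the characteristic foliation of $\Sigma$ in a standard form in which $\gamma$ is a union of Legendrian leaves meeting at elliptic singularities of alternating sign. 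With this in place, I would take $D$ to be a ``tent'' over $R$: a half-disk coinciding with $R$ along $\gamma$, rising slightly into $\Sigma\times(0,1]$ over the interior of $R$, and returning to $\Sigma\times\{0\}$ along a Legendrian Honda arc close to $\partial R\setminus \gamma$. A direct computation in these coordinates shows that, for an appropriate profile, the characteristic foliation of $D$ matches the bypass model $\mathcal{F}_{B}$ of Figure \ref{fig:bypass1}, with the unique hyperbolic singularity sitting on the Honda arc.

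For the contactomorphism statement, I would first check that $\Op(\Sigma\cup D)$ is tight. Since $D$ is trivial by construction, its outer convex boundary $\Sigma_1$ carries a dividing set obtained from $\Gamma$ by the standard bypass modification depicted in Figure \ref{fig:bypass-2}, and by hypothesis this dividing set is isotopic back to $\Gamma$ on $\Sigma$; in particular no contractible dividing curve is created on either side, so Giroux's Tightness Criterion (Theorem \ref{teo:Criterio-Giroux}) applies. The main obstacle will be to upgrade this tight neighborhood to the standard $I$-invariant model without invoking Eliashberg's uniqueness of tight contact structures on $\NS^3$. The key point is that by construction the entire configuration $\Sigma\cup D$ already lives inside the $I$-invariant neighborhood $(\Sigma\times I',\xi_{inv})$ fixed at the outset; hence $\Sigma_1$ is a convex surface isotopic to $\Sigma$ within this model and carrying a dividing set isotopic to $\Gamma$. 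Applying Giroux Realization (Theorem \ref{teo:realizacion-Giroux}) I would promote this to a smooth isotopy from $\Sigma$ to $\Sigma_1$ through convex surfaces in $\Sigma\times I'$ all carrying the dividing set $\Gamma$, and then integrate a contact vector field transverse to this family to produce a contactomorphism of $\Sigma\times I'$ sending $\Sigma_1$ onto a parallel copy $\Sigma\times\{s_0\}$. This identifies $\overline{\Op(\Sigma\cup D)}$ with $(\Sigma\times [0,s_0],\xi_{inv})$, which, after rescaling, gives the desired contactomorphism.
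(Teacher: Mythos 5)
The paper does not actually prove this lemma: it is imported as a black box from Honda (\cite{HondaGluing,HondaHuangBypass}), so there is no internal proof to compare against and your proposal has to stand on its own. As written it has two genuine gaps, each located exactly where the real content of the lemma lies. First, the ``tent over $R$'' cannot be the bypass as described. The region $R$ is cobounded by only \emph{one} of the subarcs of $\gamma$ cut off by $\Gamma$ together with an arc of $\Gamma$; in particular $\partial R$ does not contain the third intersection point of $\gamma$ with $\Gamma$, so a half-disk lying over $R$ cannot have all of $\gamma$ as its attaching arc. (Relatedly, the Honda arc cannot ``return to $\Sigma\times\{0\}$'': a bypass for $\Sigma$ must satisfy $D\cap\Sigma=\gamma$, so the Honda arc has to stay strictly in $\Sigma\times(0,1]$ away from its endpoints.) Even after repairing the geometry, the sentence ``a direct computation shows that the characteristic foliation matches $\mathcal{F}_B$'' is precisely the existence half of the lemma; it is not a routine graph computation, and Honda's argument (a fold/finger-move creating a pair of parallel dividing curves, or the explicit model of \cite{HondaHuangBypass}) does genuine work here.

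Second, the identification of $\overline{\Op(\Sigma\cup D)}$ with $(\Sigma\times I,\xi_{inv})$ misapplies Giroux Realization. Theorem \ref{teo:realizacion-Giroux} states that $\Emb(\Sigma,(M,\xi),\mathcal{F})\hookrightarrow\Emb(\Sigma,(M,\xi),\Gamma)$ is a weak homotopy equivalence; it says nothing about $\Emb(\Sigma,(M,\xi),\Gamma)$ being connected. Knowing that $\Sigma$ and $\Sigma_1$ are smoothly parallel inside $\Sigma\times I'$ and carry isotopic dividing sets therefore does not yield an isotopy through convex surfaces, nor that the slice between them is $I$-invariant: two parallel convex surfaces with isotopic dividing sets need not cobound an $I$-invariant product (think of non-product tight structures on $T^2\times I$ with identical boundary data). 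The implication ``same dividing set after the attachment $\Rightarrow$ $I$-invariant collar'' is exactly the statement that a trivial bypass is trivial, i.e.\ the assertion being proved, and the introduction explicitly flags it as requiring \cite{HondaGluing,HondaHuangBypass}. The tightness detour is also both unnecessary and unavailable in general: the lemma holds even when $\Gamma$ has contractible components, in which case the $I$-invariant neighborhood of $\Sigma$ is overtwisted and Theorem \ref{teo:Criterio-Giroux} gives nothing.
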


\subsection{(De)stabilizations, bypasses and Fuchs-Tabachnikov}\label{subsec:stabilizations}
We introduce Legendrian stabilizations as a Legendrian satellite operation, e.g. \cite{EtnyreVertesi}, so it can be carried out parametrically \cite{FMP24}. 

Consider the contact $3$-manifold $(\NS^1\times \D^2,\xi_{\std})\subseteq (J^1\NS^1,\xi_{\std}=\ker(dy-xd\theta))$,
where $(x,y)\in\D^2$ and $\theta\in \NS^1$; and the \em Lengendrian 0-section \em $s_0:\NS^1\hookrightarrow\NS^1\times\{0\}\subset\NS^1\times\D^2$. Fix a small $\varepsilon>0$, we define the positive and negative stabilizations of $s_0$ at time $t$ as the Legendrian curves $$s^t_\pm:\NS^1\hookrightarrow (\NS^1\times \D^2,\xi_{\std})$$ 
whose fronts are depicted in Figure \ref{fig:stabilizations}. These satisfy that $s^t_{\pm}(z)=s_0(z)$ for all $z\in \NS^1\backslash (t-\varepsilon,t+\varepsilon)$, whereas for $z\in (t-\varepsilon,t+\varepsilon)$ their front traces a zigzag as depicted in the figure. This definition is independent of $\varepsilon>0$ up to Legendrian isotopy. 
\begin{figure}[h]
    \centering
    \includegraphics[scale=0.4]{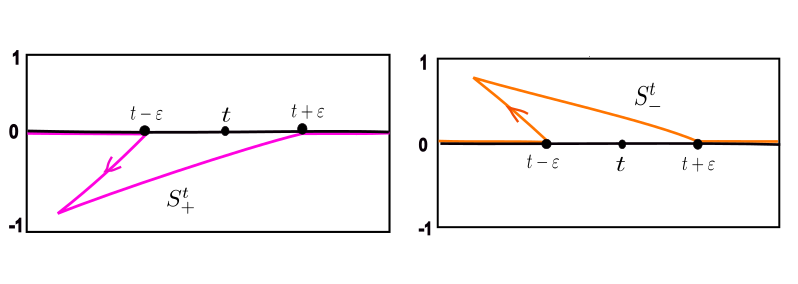}
    \caption{Stabilizations of $s_0$}
    \label{fig:stabilizations}
\end{figure}

Let us recall the Legendrian satellite construction. Consider a Legendrian embedding  $\gamma$ into a contact $3$-manifold $(M^3,\xi)$ together with a standard contact neighborhood of it, $T_\gamma : (\NS^1 \times \D^2, \xi_{std}) \to (M^3, \xi),$ where  $T_\gamma(\theta, 0, 0) = \gamma(\theta)$.
Let $\beta : \NS^1 \to (\NS^1 \times \D^2, \xi_{std})$ be another Legendrian embedding. Associated with this data we define \em Legendrian satellite \em embedding: 
$$T_\gamma \circ \beta : \NS^1 \to (M^3, \xi).$$
The Legendrian $\gamma$ is called \em companion embedding \em and $\beta$ the \em pattern embedding\em. The uniqueness, up to contact isotopy, of the standard neighborhood of $\gamma$ implies that this definition is independent of $T_\gamma$ up to Legendrian isotopy.

Given a Legendrian embedding $\gamma\in \Leg(M,\xi)$ we define the positive, resp. negative, \em stabilization \em of $\gamma$ at time $t\in\NS^1$ as the Legendrian satellite embedding with companion $\gamma$ and pattern the curve $s^t_+:\NS^1\hookrightarrow (\NS^1\times\D^2,\xi_{\std})$, resp. $s^t_-$, defined above.  Following \cite{FMP24} there exist well-defined stabilization maps at time $t\in \NS^1$ 
\begin{equation}\label{eq:StabilizationMaps}
S^t_{+}:\Leg(M,\xi)\to \Leg(M,\xi) \text{ and } S^t_{-}:\Leg(M,\xi)\to \Leg(M,\xi).
\end{equation}

We say that $\beta'$ is a \em destabilization \em of $\beta$ if $S^t_\pm(\beta')=\beta$ for some $t$.
It follows from the local construction at $(\NS^1\times\D^2,\xi_{\std})$ that $s_0$ is a destabilization of both $s^t_{+}$ and $s^t_{-}$. 
Moreover, there is a bypass disk $b_+$, resp. $b_-$, naturally co-bounded by the Honda arc $-s_0[t-\varepsilon,t+\varepsilon]$ and by the attaching arc $s^t_+[t-\varepsilon,t+\varepsilon]$, resp.$s^t_-[t-\varepsilon,t+\varepsilon]$. The bypass $b_+$ is positive whereas the $b^-$ is negative. We depicted the bypass $b_+$ in Figure \ref{fig:bypass3}.
\begin{figure}[h]
    \centering
    \includegraphics[scale=0.4]{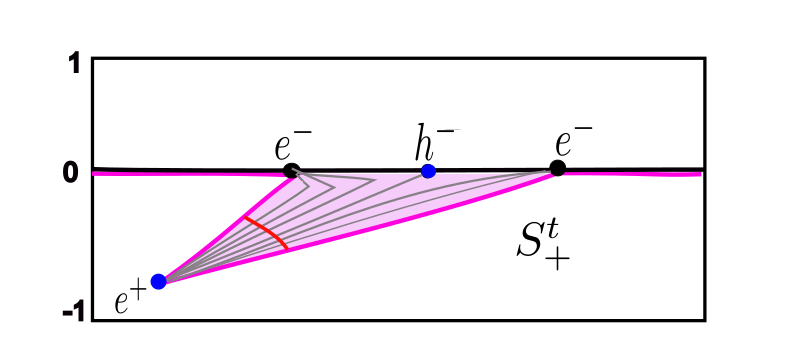}
    \caption{Bypass disk resulting from a stabilization}
    \label{fig:bypass3}
\end{figure}

In general, given $\beta$ a Legendrian embedding with $\gamma=\beta_{|I_0}$, 
where $I_0\subset\NS^1$ an subinterval and $b:D\to(M,\xi)$ a bypass disk embedding such that the attaching arc
of $b$ coincides with $\gamma$ and $b(D)\cap\beta (\NS^1)=\gamma(I_0)$, we can find a new Legendrian embedding $\beta'$
such that $S^t_\pm(\beta')=\beta$, i.e. $\beta'$ is a destabilization of $\beta$.
To do this we replace $\gamma$ by the Honda arc of $b$ with the opposite orientation after using the pivot Lemma \ref{lem:pivot}
twice to smooth the corners. Since Giroux realization is parametric all this process can be done parametrically meaning that given a family of Legendrian curves and bypasses as before one can simultaneously destabilize the Legendrians.

The main motivation behind this work rests on the previous destabilization procedure by means of bypasses embeddings together with the Fuchs-Tabachnikov Theorem \cite{FuchsTabachnikov} that states that two formally isotopic Legendrians became Legendrian isotopic after adding sufficiently many stabilizations (of both signs). In this article, we will make use of the parametric version of this result proved by Casals-del Pino \cite{CasalsDelPino}, see also Murphy \cite{Murphy}.  
    
\begin{theorem}[Fuchs-Tabachnikov \cite{CasalsDelPino}]\label{teo:Fuchs-Taba}
    Let $(M,\xi)$ be a contact 3-manifold and $\gamma^z: \NS^1\to(M ,\xi)$ a parameterized family of formal Legendrian knots 
    $z\in \D^k$, such that $\gamma^z$ Legendrian knots for $z \in \mathcal{O}p(\partial \D^k)$. Then, there exists a 
    parameterized family $\tilde{\gamma}^z$, $z\in \D^k$, of Legendrian knots such that $\tilde{\gamma}^z$, $z \in \mathcal{O}p(\partial\D^k)$, 
    is obtained from $\gamma^z$, $z\in \Op(\partial\D^k)$, by 
 adding pairs of positive and negative stabilizations several times.
\end{theorem}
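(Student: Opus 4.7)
The plan is to combine a parametric $h$-principle for Legendrian immersions with a local resolution of transverse double points using stabilization pairs. At a high level, formal flexibility is first upgraded to \emph{Legendrian immersion} flexibility via Gromov, and then stabilization pairs are used as reservoirs of local slack to further upgrade immersions to embeddings.

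First, I would apply the parametric $h$-principle for Legendrian immersions of $\NS^1$ into $(M,\xi)$, originally due to Gromov: the differential relation of Legendrian immersions is open and ample, so the inclusion of genuine Legendrian immersions into formal ones is a weak homotopy equivalence relative to any closed subset where they already agree. Applied to the family $\gamma^z$, this yields a family $\bar\gamma^z$, $z\in\D^k$, of genuine Legendrian \emph{immersions}, formally isotopic to $\gamma^z$ relative to $\Op(\partial\D^k)$, where $\gamma^z$ was already Legendrian, so $\bar\gamma^z=\gamma^z$ there.

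Second, after a small parametric perturbation the self-intersections of $\bar\gamma^z$ form a stratified codimension-$1$ subvariety $\Sigma\subseteq\D^k\setminus\Op(\partial\D^k)$, and near its top stratum $\bar\gamma^z$ has a single transverse Legendrian double point. The key local input is a \emph{resolution lemma}: at such a double point, adding one positive and one negative stabilization on one of the branches introduces a zigzag in the front projection, which, after a front Reidemeister move, can be used to push the two sheets apart while remaining Legendrian. Morally this is the content of the classical non-parametric Fuchs--Tabachnikov theorem, which is essentially the $k=0$ case of what we want.

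Finally, the local resolutions must be assembled into a globally consistent family. I would proceed by induction on the codimension of the strata of $\Sigma$, starting from the highest-codimension strata --- where several sheets collide simultaneously or double points degenerate to tangencies --- and extending outward. Whenever two distinct resolution strategies meet, insert additional pairs of stabilizations to interpolate between them; this is always permitted, since the conclusion asks only for \emph{some} finite number of stabilization pairs, and a positive-negative pair preserves the formal Legendrian class. The main obstacle is precisely this third step: coordinating the local resolutions coherently over $\D^k$, particularly at the higher-codimension strata. The technical heart of the Casals--del Pino argument lies here, with careful bookkeeping of which branch receives which stabilizations and of how those choices propagate across adjacent strata.
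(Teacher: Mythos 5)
This statement is not proved in the paper at all: it is quoted as background from Casals--del Pino \cite{CasalsDelPino} (see also \cite{FuchsTabachnikov,Murphy}), so there is no internal argument to compare yours against. Judged on its own, your outline does follow the standard template for results of this type (upgrade formal data to Legendrian immersions, then use stabilization pairs to resolve the double-point locus in the parameter space), but as written it has two genuine problems. The first is a technical misstatement: the Legendrian immersion relation is \emph{not} open and ample --- tangency to $\xi$ is a closed condition --- so convex integration for open ample relations does not apply as you invoke it. The parametric, relative $h$-principle for Legendrian immersions is true, but it is established by other means (reduction to the Lagrangian immersion $h$-principle in the symplectization, or microflexibility plus holonomic approximation), and the mechanism matters if one wants to keep track of the formal class relative to $\Op(\partial\D^k)$.

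The second, more serious issue is that your third step is a restatement of the problem rather than a proof of it. The entire content of the parametric Fuchs--Tabachnikov theorem is the coherent, parameter-dependent resolution of the stratified double-point locus $\Sigma\subseteq\D^k$, and you explicitly defer this (``the technical heart of the Casals--del Pino argument lies here''). Note also a global constraint your sketch glosses over: in dimension $3$ a stabilization changes the Legendrian isotopy class, so a zigzag cannot be born along a continuous family of Legendrian embeddings; any stabilization pair you ``insert to interpolate'' at an interior stratum must therefore already be present on the boundary family, and the conclusion requires that $\tilde\gamma^z$ over $\Op(\partial\D^k)$ be a genuine iterated stabilization of $\gamma^z$. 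The bookkeeping of which branch receives which stabilizations, and how these choices propagate consistently across adjacent strata and out to $\partial\D^k$, is exactly what must be carried out; without it the proposal identifies the right strategy but does not establish the theorem.
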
 

\begin{remark}
The previous result states that multiple applications of the operators $S^t_+$ and $S^t_-$, defined in (\ref{eq:StabilizationMaps}), on the space of Legendrian embeddings allow for gaining flexibility. This is analogous to the situation in higher dimensions, with the crucial difference being that in dimension 3, the stabilization map changes the Legendrian isotopy class, whereas, as observed by Murphy \cite{Murphy}, this is not the case in higher dimensions for stabilizations along circles, as defined in \cite{EkholmEtnyreSullivan}, for Legendrians with at least one stabilization, i.e., Loose Legendrians, since these can be self-replicated. Nevertheless, one can still attempt to use this result to prove flexibility results for Legendrian embeddings in dimension 3 by trying to invert (in homotopy) the operators $S^t_+$ and $S^t_-$. This, of course, is not generally possible \cite{Chekanov}. However, it follows from our previous discussion that the obstruction to doing this is measured by the homotopy groups of a certain space of bypass embeddings. Therefore, Theorem \ref{teo:bypassMiddle} can be applied to show that this indeed can be achieved in some special situations (see Theorem \ref{thm:StabilizationMapsLooseLegendrians}).
\end{remark}

\subsection{Contact Hamiltonians}\label{subsec:contactHamiltonian}
Let $(M,\xi)$ be a contact manifold. Fix a contact form $\alpha\in \Omega^1(M)$; i.e. $\xi=\ker(\alpha)$, so that there is an associated \em Reeb field \em $R$ characterized by the equations $\alpha(R)=1$ and $i_{R}d\alpha=0$. 

The tangent space of the contactomorphism group $\Cont(M,\xi)$ at the Identity, i.e. its Lie Algebra, is precisely the space of contact vector fields. The later can be identified with the space of smooth function $C^\infty(M)$  by assigning to each contact vector field $X$ the smooth function $H_X:=\alpha(X)$ known as \em contact Hamiltonian. \em This assignment identifies the space of contact vector fields with the space of smooth functions $C^\infty(M)$. Indeed, given a function $H\in\C^\infty(M)$ there is a unique contact vector field $X_H$ determined by the relations $\alpha(X_H)=H$ and $i_{X_H} d\alpha = dH(R )\alpha - dH$.

\section{The Bypass in the Middle Theorem}\label{sec:BypassInTheMiddle}

In this section we will prove the Bypass in the Middle Theorem \ref{teo:bypassMiddle}.
The main ingredient in the proof is Theorem \ref{thm:TrivialBypasses}, that could be thought as a parametric version of Honda's triviality Lemma \ref{lem:trivial}.

\subsection{The space of trivial bypasses}
\begin{theorem}\label{thm:TrivialBypasses}
   Let $\Sigma \subset(M^3,\xi)$ be a convex surface with possibly empty Legendrian boundary and $\gamma$ attaching arc for $\Sigma$ of a trivial bypass. Then, the space $\Bypass_\gamma((M,\xi),\Sigma)$ is contractible.
\end{theorem}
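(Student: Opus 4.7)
The strategy is to parametrize Honda's triviality Lemma \ref{lem:trivial} so that every trivial bypass in a given family sits inside a common standard $I$-invariant model, and then to contract the family inside this model. More precisely, given a family $\{b_z\}_{z \in \D^k}$ in $\Bypass_\gamma((M,\xi),\Sigma)$ that is constant on $\partial \D^k$, I would aim to build, continuously in $z$, a contactomorphism $\phi_z : (\Sigma \times I, \xi_{inv}) \to \overline{\Op(\Sigma \cup b_z(\D^2_+))}$ that fixes $\Sigma = \Sigma \times \{0\}$ pointwise and is the identity on a neighborhood of $\gamma$.

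To construct $\phi_z$, I would first parametrically smooth the corner along $\gamma$ and obtain a family of convex surfaces $\Sigma_z$ that together with $\Sigma$ co-bound the neighborhoods $N_z := \overline{\Op(\Sigma \cup b_z(\D^2_+))}$. Triviality of the attaching arc forces every $\Sigma_z$ to share the dividing set $\Gamma_\Sigma$. A parametric application of Giroux realization (Theorem \ref{teo:realizacion-Giroux}) then normalizes the characteristic foliations of the $\Sigma_z$ to a common reference, and Theorem \ref{teo:giroux-entorno} combined with a Moser-type argument in families yields the required $\phi_z$.

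Once inside the standard model $(\Sigma \times I, \xi_{inv})$, the family reduces to $\tilde{b}_z := \phi_z^{-1} \circ b_z$, a family of bypasses sharing attaching arc and characteristic foliation. I would then contract this family to a reference bypass using a canonical deformation exploiting the $I$-invariance of $\xi_{inv}$: a suitably chosen contact Hamiltonian supported away from $\gamma$ (constructed via the formalism of Section \ref{subsec:contactHamiltonian}) generates a contact isotopy that drags any bypass with the prescribed data to a standard one, and applied parametrically this produces the desired nullhomotopy in $\Bypass_\gamma((M,\xi),\Sigma)$.

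The main technical difficulty will be the parametric enhancement of Honda's triviality lemma — namely building the family $\phi_z$ with the boundary constraints (fixing $\Sigma$ and $\gamma$) in a continuous way. All the ingredients are classical in the unparametrized setting, but promoting them to families requires care with parameter-dependent Moser arguments and parametric Giroux realization with Legendrian boundary.
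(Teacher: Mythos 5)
There is a genuine gap: the step you flag as ``the main technical difficulty'' --- constructing the family $\phi_z:(\Sigma\times I,\xi_{inv})\to \overline{\Op(\Sigma\cup b_z(\D^2_+))}$ continuously in $z$ --- is not a routine enhancement of known results but is essentially equivalent to the theorem itself. If such a family existed for every sphere of bypasses (compatibly with a constant choice over $\partial\D^k$), the nullhomotopy would follow immediately, so nothing has been reduced. The tools you invoke do not supply it: Theorem \ref{teo:giroux-entorno} only produces a contactomorphism \emph{germ} near the corner-smoothed surface $\Sigma_z$, not a contactomorphism of the entire region $N_z$ onto the $I$-invariant model, and extending that germ over all of $N_z$ in families is precisely where the difficulty lives; parametric Giroux realization normalizes the characteristic foliations of the $\Sigma_z$ but says nothing about the contact topology of the regions they bound. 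Your final step is also unjustified: inside $(\Sigma\times I,\xi_{inv})$ the flow exploiting $I$-invariance only translates in the $t$-direction and does not canonically ``drag'' an arbitrary trivial bypass with the prescribed attaching data onto a reference one. Even after both bypasses are pushed into a common collar, one still needs a contactomorphism carrying one to the other \emph{and} an argument that this contactomorphism is contact isotopic to the identity relative to $\Sigma$; your outline addresses neither point.

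The paper circumvents parametrizing Honda's lemma altogether. It views the action map $\Cont_0(M,\xi,\rel\Sigma)\to\Bypass_\gamma((M,\xi),\Sigma)$, $\varphi\mapsto\varphi\circ b$, as a fibration with fiber $\Cont_0(M,\xi,\rel\Sigma\cup D)$. Surjectivity is proved by applying the \emph{non-parametric} triviality Lemma \ref{lem:trivial} twice, together with a contact Hamiltonian flow squeezing one bypass into the standard neighborhood of the other and a separate argument that the resulting contactomorphism is isotopic to the identity; the fiber inclusion is shown to be a weak homotopy equivalence by conjugating compact families of contactomorphisms with such a squeezing flow. Contractibility of the base then follows from the long exact sequence of the fibration. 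If you wish to salvage your approach, you should recast the existence of the family $\phi_z$ as the existence of a section of a fibration of this type rather than asserting it outright.
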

\begin{proof}
Let $b\in\Bypass_\gamma((M,\xi),\Sigma)$ and $D=\image(b)$. The map $$\Cont_0(M,\xi;\rel \Sigma)\rightarrow \Bypass_\gamma((M,\xi),\Sigma), \varphi\mapsto \varphi\circ b;$$ defines a fibration
$$\Cont_0(M,\xi, \rel \Sigma\cup D)\hookrightarrow \Cont_0(M,\xi, \rel \Sigma)\to\Bypass_\gamma((M,\xi),\Sigma). $$
Here $\Cont_0(M,\xi, \rel \Sigma)$  stands for the path connected components of the identity map and $\Cont_0(M,\xi,\rel \Sigma \cup D)=\Cont_0(M,\xi;\rel \Sigma)\cap \Cont(M,\xi,\rel \Sigma\cup D)$. It follows from Lemma \ref{lem:surjective}, proved below, that the fibration map is surjective. Moreover, by Lemma \ref{lem:fiber}, also proved below; the inclusion of the fiber in the total space is a weak homotopy equivalence. Therefore, the base is contractible and the result follows.
\end{proof}

It remains to prove Lemmas \ref{lem:surjective} and \ref{lem:fiber}.

\begin{lemma}\label{lem:surjective}
    The map $\Cont_0(M,\xi, \rel \Sigma)\to \Bypass_\gamma((M,\xi),\Sigma),\varphi\mapsto \varphi\circ b;$ is surjective.
\end{lemma}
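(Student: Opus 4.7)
The plan is to convert the local rigidity supplied by Honda's triviality Lemma \ref{lem:trivial} into a global contact isotopy. Given $b'\in\Bypass_\gamma((M,\xi),\Sigma)$, I will produce a contact isotopy of $(M,\xi)$ fixing $\Sigma$ whose time-one map $\varphi$ satisfies $\varphi\circ b=b'$; such a $\varphi$ automatically lies in $\Cont_0(M,\xi,\rel\Sigma)$. Note that $b'$ is automatically a trivial bypass because $\gamma$ is by hypothesis a trivial attaching arc.

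First I would apply Lemma \ref{lem:trivial} to each of $b$ and $b'$ to obtain neighborhoods $N\supset\Sigma\cup b(\D^2_+)$ and $N'\supset\Sigma\cup b'(\D^2_+)$ contactomorphic, via maps restricting to the identity on $\Sigma$, to the $I$-invariant model $(\Sigma\times I,\xi_{inv})$. After shrinking, both images can be placed inside a common $I$-invariant neighborhood $N_0$ of $\Sigma$ in which $b$ and $b'$ appear as bypass embeddings sharing the attaching arc $\gamma\subset\Sigma=\Sigma\times\{0\}$, having the same characteristic foliation $\mathcal{F}_B$, and agreeing on an open neighborhood of $\{y=0\}$ in $\D^2_+$.

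Next, I would construct a contact isotopy $\psi_s$ of $N_0$, $s\in[0,1]$, with $\psi_0=\Id$, $\psi_s|_\Sigma=\Id_\Sigma$, $\psi_s$ equal to the identity on an open neighborhood of the attaching arc, and $\psi_1\circ b=b'$. Since both bypass images have the characteristic foliation $\mathcal{F}_B$, Giroux's neighborhood theorem \ref{teo:giroux-entorno} provides a contactomorphism of open neighborhoods of $b(\D^2_+)$ and $b'(\D^2_+)$ in $N_0$ sending $b$ to $b'$ and equal to the identity where $b=b'$. To upgrade this so that the contactomorphism additionally fixes $\Sigma$ pointwise, I would combine Giroux realization \ref{teo:realizacion-Giroux} with the flow of the contact vector field $\partial_t$ in the $I$-invariant model, exploiting the freedom to push the deformation off $\Sigma$ in the $t$-direction so that its generator can be taken to vanish on $\Sigma$.

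Finally I would globalize: the generating contact Hamiltonian $H_s\in C^\infty(N_0)$ of $\psi_s$ vanishes on $\Sigma$, so multiplying by a cutoff function equal to $1$ on the compact $\psi$-orbit of $\Sigma\cup b(\D^2_+)$ and vanishing outside $N_0$ yields a smooth compactly supported Hamiltonian on $M$ whose flow provides $\varphi\in\Cont_0(M,\xi,\rel\Sigma)$ with $\varphi\circ b=b'$. The main obstacle is the middle step: simultaneously enforcing that the local contact isotopy fix $\Sigma$ pointwise, be the identity near $\gamma$, and carry $b$ onto $b'$. Guaranteeing all three conditions requires careful use of the $\partial_t$-flow in the $I$-invariant model together with the pivot-type adjustments allowed by Lemma \ref{lem:pivot}.
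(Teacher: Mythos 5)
Your overall strategy is the same as the paper's (reduce to the $I$-invariant model via Honda's Lemma \ref{lem:trivial}, build a contactomorphism there, then globalize with a cut-off contact Hamiltonian), but there is a genuine gap in your middle step. Theorem \ref{teo:giroux-entorno} only produces a contactomorphism between \emph{germs} of neighborhoods of $b(\D^2_+)$ and $b'(\D^2_+)$; it gives no ambient contactomorphism of $N_0$, and a germ of a contactomorphism along a surface does not in general extend to the ambient manifold. To extend it you must also identify the \emph{complements} of the two bypass disks inside the $I$-invariant neighborhood, and this is exactly where the triviality of the bypasses must be used a second time: by Lemma \ref{lem:trivial}, the closure of $\Sigma\times I$ minus a neighborhood of (either) disk together with a collar of the attaching arc is again contactomorphic to the standard $(\Sigma\times I,\xi_{inv})$, so the germ identification near the disks can be glued to an identification of the complements to yield a genuine $\hat\varphi\in\Cont(\Sigma\times I,\xi_{inv})$ with $\hat\varphi\circ b=b'$ that is the identity near $\Sigma\times\{0\}$ and near $\Sigma\times\{1\}$, hence extends to $M$. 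Your proposal never addresses the complement, and the difficulties you do flag (fixing $\Sigma$ pointwise, being the identity near $\gamma$) are the comparatively easy ones. A secondary, related issue: once $\hat\varphi$ is built this way it is a contactomorphism, not a priori the endpoint of a contact isotopy, so membership in $\Cont_0$ is not ``automatic''; one still has to deform it to the identity, e.g.\ by conjugating/composing with the $t$-direction squeezing flow so that it becomes supported in a collar of $\Sigma\times\{1\}$ disjoint from the disks.

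A smaller but real imprecision occurs at the start: ``after shrinking, both images can be placed inside a common $I$-invariant neighborhood'' is not something shrinking can achieve, since the disks themselves cannot be shrunk. One must actively move $b'(\D^2_+)$ into the model neighborhood $\overline{\Op(\Sigma\cup b(\D^2_+))}$ of $b$, which is done by flowing with the contact vector field of a Hamiltonian of the form $H(p,t)=-\rho(t)f(p)$ in the invariant model (the analogue of your $\partial_t$-push, but it must be applied \emph{before} any comparison of the two disks, and the resulting contact isotopy must be recorded and undone at the end). With the complement identification supplied and this repositioning made explicit, your outline becomes essentially the paper's proof.
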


\begin{proof}
    Let $\tilde{b}\in\Bypass_\gamma((M,\xi),\Sigma)$ be a trivial bypass. We must find a contactomorphism $\tilde{\varphi}$, contact isotopic to the identity such that $\tilde{\varphi}\circ b=\tilde{b}$.
    
    Write $D=\image(b)$ and $\tilde{D}=\image(\tilde{b})$ and $I=[0,1]$. Consider an I-invariant neighborhood $(\Sigma\times I, \xi_{inv}=\ker(fdt+\beta))$ of $\Sigma$, where $f\in C^\infty(\Sigma)$ and $\beta\in \Omega^1(\Sigma)$. Since $b$ and $\tilde{b}$ are trivial bypasses, Lemma \ref{lem:trivial} implies the existence of contactomorphisms 
    $$F:(\Sigma\times I,\xi_{inv})\to (\overline{\Op(\Sigma\cup D)},\xi),$$
    $$\tilde{F}:(\Sigma\times I,\xi_{inv})\to (\overline{\Op(\Sigma\cup \tilde{D}}),\xi).$$
    We can assume that 
    \begin{enumerate}[label=(\roman*)]
        \item $F_{|\Sigma\times[0,2\varepsilon]}=\tilde{F}_{|\Sigma\times[0,2\varepsilon]}$ for some small $\varepsilon>0$,
        \item $F(\Sigma\times \{\varepsilon\})=\tilde{F}(\Sigma\times \{\varepsilon\})=\Sigma\subseteq M$.
    \end{enumerate}
    Fix $\delta\in(0,\varepsilon)$ small such that $\tilde{D}\subset \tilde{F}(\Sigma\times [0,1-\delta])$. Consider the Hamiltonian $$H(p,t)=-\rho(t)f(p),$$ in $\Sigma\times I$,
    where $\rho:[0,1]\to[0,1]$ is a smooth bump function such that 
    \begin{enumerate}[label=(\alph*)]
        \item $\rho(t)=0$ for $t\in \Op([0,\varepsilon]\cup \{1\})$ and 
        \item $\rho(t)=1$ for $t \in \Op([\varepsilon+\delta,1-\delta])$.
    \end{enumerate}
    Let $X_H$ be the associated contact vector field in $(\Sigma\times I,\xi_{inv})$ with respect to the contact form $\alpha=f dt+ \beta$. Then, the contact vector field $\tilde{F}_*X_H$ generates a contact flow \break $g_u\in \Cont_0(M,\xi,\rel\Sigma)$, $u\in \R$,
    with support in $\tilde{F}(\Sigma\times I)$.  Fix $U>0$ big enough so that $g_U(\tilde{D})\subset F(\Sigma\times I)$ and consider 
    $\hat{b}:=g_U\circ\tilde{b}$. 

    Let $\hat{\gamma}\times\{\varepsilon\}=F^{-1}(\gamma)\subseteq \Sigma\times\{\varepsilon\}$ be the pre-image under $F$ of the attaching arc of both $D$ and $\hat{D}:=\image(\hat{b})=g_U(\tilde{D})$. Again Lemma \ref{lem:trivial} implies that
    $$(\Sigma\times I\setminus\Op(F^{-1}(D)\cup \hat{\gamma}\times[0,\varepsilon])),\xi_{inv})\cong (\Sigma\times I\setminus\Op(F^{-1}(\hat{D})\cup \hat{\gamma}\times[0,\varepsilon]),\xi_{inv})\cong(\Sigma\times I,\xi_{inv}).$$ This, together with Theorem \ref{teo:giroux-entorno}, allow us to find
    $\hat{\varphi}\in \Cont(\Sigma\times I,\xi_{inv})$ such that $\hat{\varphi}\circ F^{-1}\circ b=F^{-1} \circ \hat{b}$. We claim that we can assume that $\hat{\varphi}$ is contact isotopic to the identity. Indeed, if this is not the case we can use a contact Hamiltonian as before to isotope $\hat{\varphi}^{-1}$ to a contactomorphism $G\in \Cont(\Sigma\times I,\xi_{inv})$ with support in $\Op(\Sigma\times\{1\})$. Hence, $G\circ \hat{\varphi}\in \Cont_0(\Sigma\times I,\xi_{inv})$ would be contact isotopic to the identity and still satisfy $G\circ \hat{\varphi}\circ F^{-1}\circ b=F^{-1}\circ \hat{b}$. 
    
    Therefore, the contactomorphism $$\tilde{\varphi}=g_U^{-1}\circ F\circ \hat{\varphi}\circ F^{-1}\in \Cont_0(M,\xi,\rel \Sigma)$$ is contact isotopic to the identity, here $F\circ \hat{\varphi}\circ F^{-1}$ is defined to be the identity outside $\image(F)$, and satisfies that $ \tilde{\varphi} \circ b= \tilde{b}$ as required. This concludes the argument.  
\end{proof}

\begin{lemma}\label{lem:fiber}
    The inclusion of the fiber 
    $$\Cont_0(M,\xi, \rel \Sigma\cup D)\hookrightarrow \Cont_0(M,\xi, \rel \Sigma)$$ is a weak homotopy equivalence.
\end{lemma}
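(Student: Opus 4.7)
The plan is to construct a strong deformation retraction of $\Cont_0(M,\xi,\rel\Sigma)$ onto $\Cont_0(M,\xi,\rel\Sigma\cup D)$, which immediately yields the claimed weak equivalence. The key input I would establish is a parametric strengthening of Lemma~\ref{lem:surjective}: for every continuous family $\tilde{b}_z \in \mathcal{B}_\gamma((M,\xi),\Sigma)$, $z \in K$, of trivial bypasses for $\Sigma$, with $\tilde{b}_z = b$ for $z$ in a subcomplex $K_0 \subseteq K$, there should exist a continuous family of contact isotopies $\psi_z^s \in \Cont_0(M,\xi,\rel\Sigma)$, $s \in [0,1]$, satisfying $\psi_z^0 = \mathrm{Id}$, $\psi_z^1 \circ b = \tilde{b}_z$, and $\psi_z^s \equiv \mathrm{Id}$ for $z \in K_0$. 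To produce it I would replay the construction of Lemma~\ref{lem:surjective} with parameters: the $I$-invariant parametrizations $\tilde{F}_z$, together with the normalization that $\tilde{F}_z$ coincides with a fixed $F$ on $\Sigma\times [0,2\varepsilon]$, and the contactomorphism $\hat{\varphi}_z$ obtained via Theorem~\ref{teo:giroux-entorno}, are both produced using the parametric form of Giroux's germ theorem for convex surfaces, while the Hamiltonian flow generated by $H(p,t) = -\rho(t) f(p)$ is already parametric by nature. Concatenating the canonical contact isotopies to the identity associated with each piece (the Hamiltonian flow itself, and the Giroux isotopy from the last paragraph of the proof of Lemma~\ref{lem:surjective}) then supplies the family $\psi_z^s$, which collapses to the identity exactly when $\tilde{b}_z = b$.

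Given a test family $\varphi_z \in \Cont_0(M,\xi,\rel\Sigma)$ with $\varphi_z \in \Cont_0(M,\xi,\rel\Sigma\cup D)$ on a subcomplex $K_0$, I would apply the parametric statement above to $\tilde{b}_z := \varphi_z \circ b$. This family does lie in $\mathcal{B}_\gamma((M,\xi),\Sigma)$ because $\varphi_z$ is the identity on a neighborhood of $\Sigma$, hence on a neighborhood of the attaching arc $\gamma$. I then set
$$
H_s(\varphi_z) := (\psi_z^s)^{-1} \circ \varphi_z, \qquad s \in [0,1].
$$
Each $H_s(\varphi_z)$ lies in $\Cont_0(M,\xi,\rel\Sigma)$; at $s = 0$ it equals $\varphi_z$, while at $s = 1$ the identity $\psi_z^1 \circ b = \varphi_z \circ b$ forces $(\psi_z^1)^{-1} \circ \varphi_z$ to fix $D$ pointwise, placing $H_1(\varphi_z)$ in the fiber $\Cont_0(M,\xi,\rel\Sigma\cup D)$. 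For $z \in K_0$ one has $\tilde{b}_z = b$, so $\psi_z^s \equiv \mathrm{Id}$ and $H_s(\varphi_z) \equiv \varphi_z$ throughout the deformation. This is the desired strong deformation retraction.

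The main obstacle will be the parametric refinement of Giroux's germ theorem underlying Lemma~\ref{lem:surjective}, together with the continuity of the associated contact isotopies to the identity and, crucially, their triviality on the distinguished subcomplex $K_0$. These parametric upgrades are by now standard consequences of Giroux theory with parameters, but the bookkeeping required to make the construction relative to $K_0$ --- so that the retraction genuinely fixes $\Cont_0(M,\xi,\rel\Sigma\cup D)$ pointwise --- is the delicate ingredient of the argument.
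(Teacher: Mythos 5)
Your architecture inverts the paper's, and the inversion creates a genuine gap: the ``key input'' you rely on --- a parametric, relative-to-$K_0$ strengthening of Lemma \ref{lem:surjective} --- is not established, is not a routine consequence of parametric Giroux theory, and is in fact essentially equivalent to the statement that Lemmas \ref{lem:surjective} and \ref{lem:fiber} are jointly designed to prove. For the fibration $\Cont_0(M,\xi,\rel\Sigma)\to\Bypass_\gamma((M,\xi),\Sigma)$, the assertion that every family $\tilde b_z$, $z\in K$, with $\tilde b_z=b$ on $K_0$, lifts to contact isotopies $\psi_z^s$ starting at the identity and constant on $K_0$, for all CW pairs $(K,K_0)$, is precisely the weak contractibility of the base, i.e.\ Theorem \ref{thm:TrivialBypasses} itself. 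To ``replay Lemma \ref{lem:surjective} with parameters'' you would need, at minimum, a continuous-in-$z$ choice of the trivializations $\tilde F_z$ of $(\overline{\Op(\Sigma\cup \tilde D_z)},\xi)$ furnished by Honda's Lemma \ref{lem:trivial}, and a continuous-in-$z$, relative-to-$K_0$ choice of $\hat\varphi_z$, including the discrete correction that makes it isotopic to the identity. Neither is available: Lemma \ref{lem:trivial} is a pure existence statement coming from Honda's triviality of trivial bypasses (not from Giroux realization), and a parametric version of it is exactly the content of Theorem \ref{thm:TrivialBypasses}, which the paper deduces \emph{from} the two lemmas via the long exact sequence of the fibration. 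So your proof of the fiber lemma presupposes the conclusion of the section. (The formal shape of your argument --- produce $\psi_z^s$ with $\psi_z^1\circ b=\varphi_z\circ b$ and set $H_s(\varphi_z)=(\psi_z^s)^{-1}\circ\varphi_z$ --- is fine once the input is granted; the input is the whole difficulty.)

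The paper's proof is much softer and bypasses all of this. Given $\Phi:(K,G)\to(\Cont_0(M,\xi,\rel\Sigma),\Cont_0(M,\xi,\rel\Sigma\cup D))$, it applies Lemma \ref{lem:trivial} once, non-parametrically, to the single fixed bypass $D$, identifying $\overline{\Op(\Sigma\cup D)}$ with an $I$-invariant collar $F(\Sigma\times I)$ with $D\subseteq F(\Sigma\times[0,1-\delta])$, and observes that every $\Phi(k)$ is already the identity on a slab $F(\Sigma\times[0,\varepsilon+\delta])$ near $\Sigma$. Conjugating, $\Phi_s=g_{sU}^{-1}\circ\Phi\circ g_{sU}$, by the compactly supported contact flow of the Hamiltonian $H=-\rho(t)f(p)$, which compresses $F(\Sigma\times[0,1-\delta])$ into that slab, pushes the entire family into the fiber while leaving it unchanged on $G$. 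If you want to keep your strategy, the honest order of logic is the paper's: prove the fiber lemma by this conjugation trick first; only after Theorem \ref{thm:TrivialBypasses} is in hand does your parametric lifting statement become available.
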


\begin{proof} 
    Let $(K,G)$ be a compact CW-pair and $$\Phi:(K,G)\rightarrow (\Cont_0(M,\xi,\rel \Sigma),\Cont_0(M,\xi,\rel \Sigma\cup D))$$ a continuous map. It is enough to find a homotopy $$\Phi_s:(K,G)\rightarrow (\Cont_0(M,\xi,\rel \Sigma),\Cont_0(M,\xi,\rel \Sigma\cup D)), s\in[0,1],$$ such that 
    \begin{enumerate}[label=(\alph*)]
        \item $\Phi_0=\Phi,$
        \item $\Phi_s(g)=\Phi(g)$, for $(s,g)\in [0,1]\times G$, 
        \item $\image(\Phi_1)\subseteq \Cont_0(M,\xi,\rel \Sigma\cup D)$.
   \end{enumerate}

    Lemma \ref{lem:trivial} implies the existence of a contactomorphism
    $$F:(\Sigma\times I,\xi_{inv})\to (\overline{\Op(\Sigma\cup D)},\xi)),$$
    where $\xi_{inv}=\ker(fdt+\beta)$ and $F(\Sigma\times\{\varepsilon\})=\Sigma\subseteq M$ for some $\varepsilon>0$ small. We may further assume that $\Phi(k)_{|\image(F)}=\Id$ for all $k\in G$.
    
    Fix $\delta>0$ small enough such that 
    \begin{enumerate}[label=(\roman*)]
        \item $D\subseteq  F(\Sigma\times [0,1-\delta])$,
        \item $\Phi(k)_{|\Op(F(\Sigma\times [0,\varepsilon+\delta])}=\Id$ for all $k\in K$.
    \end{enumerate}

    Let $X_H$ be the contact vector field in $(\Sigma\times I,\xi_{inv})$ defined in the proof of Lemma \ref{lem:surjective}. Consider the contact flow $g_u\in \Cont_0(M,\xi,\rel\Sigma)$, $u\in\R$, with support in $F(\Sigma\times[0,1])$, of the contact vector field $F_*X_H$. Fix $U>0$ big enough such that $g_U(F(\Sigma\times [0,1-\delta]))$ is contained in $F(\Sigma\times [0,\varepsilon+\delta])$. The required homotopy is given by 
    $$\Phi_s= g_{sU}^{-1}\circ \Phi \circ g_{sU}, s\in[0,1]. $$
\end{proof}

\subsection{Proof of Theorem \ref{teo:bypassMiddle}}

Let us explain the intuition behind the proof of Theorem \ref{teo:bypassMiddle}, where the goal is to show that a family of bypasses admitting a bypass in the middle is contractible. The existence of a bypass in the middle allows us to reduce the problem to a parametric family of tight $3$-balls, where the problem can be solved in each ball individually (Theorem \ref{thm:TrivialBypasses}). To piece together the different solutions corresponding to the various balls, we use a microfibration argument, where the contractibility of the fibers is ensured once more by Theorem \ref{thm:TrivialBypasses}.

\begin{proof}[Proof of Theorem \ref{teo:bypassMiddle}]Let $K$ be a compact parameter space and $b^z\in \mathcal{B}_\gamma^b(\Sigma,(M,\xi))$, $z\in K$, a continuous family of bypasses, all of them disjoint from $b$ away from the attaching arc $\gamma$. 

It is enough to find a homotopy $b^{z,t}\in\mathcal{B}_\gamma(\Sigma,(M,\xi))$, $t\in [0,1]$, such that 
\begin{enumerate}[label=(\alph*)]
    \item $b^{z,0}=b^z$,
    \item $b^{z,1}=b$.
\end{enumerate}

We will reformulate the existence of such a homotopy as the existence of a section of a (micro)fibration with non-empty contractible fiber so the result will follow.

Write $D^z=\image(b^z)$, $z\in K$, and $D=\image(b)$. For each $z\in K$, the half-disks $D^z$ and $D$ are disjoint away from $\gamma$; hence, we can find an embedded closed $3$-disk $C^z$ such that
\begin{enumerate}[label=(\roman*)]
    \item $D^z \cup D\subseteq C^z$,
    \item $\Sigma\cap C^z=\Sigma\cap \partial C^z=\overline{\mathcal{O}p_\Sigma(\gamma)}$ is independent of $z\in K$, and  
    \item $(C^z,\xi)$ is tight.
\end{enumerate}
 Conditions (i) and (ii) are obvious. To check condition (iii) observe that, as a consequence of Theorem \ref{teo:giroux-entorno}, $(\Op(D^z\cup D),\xi)$ contactly embeds in an $I$-invariant neighborhood of $D$, which is tight by Giroux Criteria Theorem \ref{teo:Criterio-Giroux}. Therefore, taking $C^z$ small enough we can ensure tightness.

 \begin{figure}[h]
    \centering
    \includegraphics[scale=0.3]{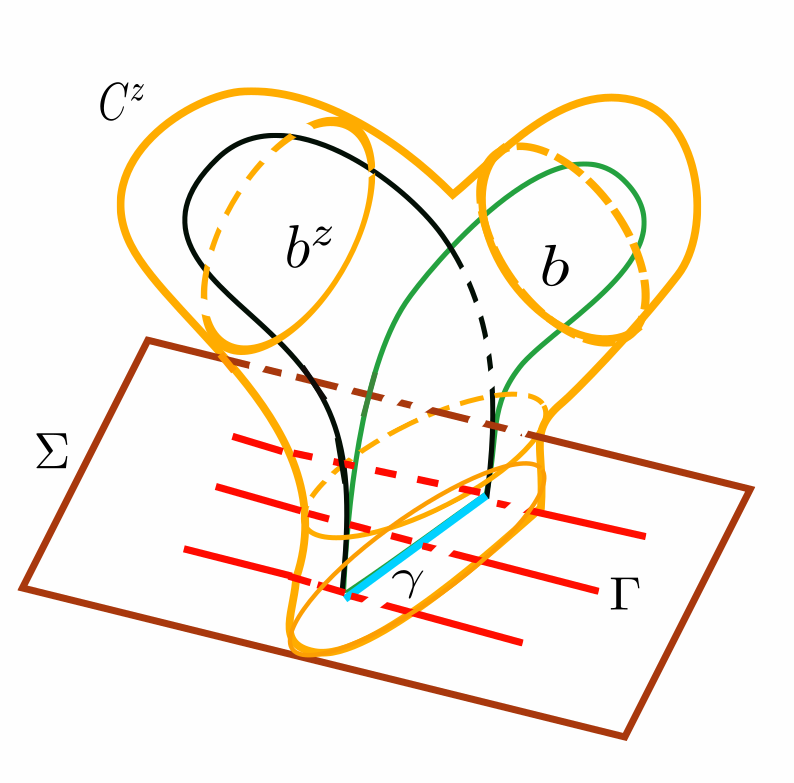}
    \caption{Schematic depiction of the $3$-disk $C^z$.}
    \label{fig:middle1}
\end{figure}

Denote by $\mathcal{X}$ the space of pairs $(z,\hat{b}^{z,t})$ where 
\begin{enumerate}[label=(\alph*)]
    \item $z\in K$,
    \item $\hat{b}^{z,t}\in\mathcal{B}_\gamma(\partial C^z,(C^z,\xi))$, $t\in [0,1]$, is a homotopy of bypasses between $b^z=\hat{b}^{z,0}$ and $b=\hat{b}^{z,1}$.
\end{enumerate}
The projection $$\pi:\mathcal{X}\rightarrow K,\quad (z,\hat{b}^{z,t})\mapsto z;$$ is naturally a microfibration. Moreover, since each $(C^z,\xi)$ is tight, every bypass attached to $\partial C^z$ inside $C^z$ is trivial. Therefore, every fiber is contractible because of Theorem \ref{thm:TrivialBypasses}. By Weiss Microfibration Lemma \cite{Weiss} we conclude that
$\pi$ is a Serre fibration with contractible fiber. \footnote{In fact, choosing $C^z$ carefully enough one can ensure that it is a Serre fibration, without appealing to the Microfibration Lemma.} In particular, there exists a section $s:K\rightarrow \mathcal{X}$. The second component of $s(k)$ is the desired homotopy.
\end{proof}

\begin{remark}
    The previous argument can be easily to prove the following more general property. Let $K$ be a compact parameter space and $B_0,B_1:K\rightarrow \mathcal{B}_\gamma(\Sigma,(M,\xi))$ two families of bypass embeddings such that 
    $$ B_0(k)\in \mathcal{B}_\gamma^{B_1(k)}(\Sigma,(M,\xi)) $$
    for all $k\in K$. Then, $B_0$ and $B_1$ are homotopic through families of bypass embeddings. The details are left to the reader.
\end{remark}

\section{Applications to overtwisted contact $3$-manifolds}\label{sec:Applications}
In this Section we will prove all the corollaries of the Bypass in the Middle Theorem \ref{teo:bypassMiddle} stated in \ref{subsec:Applications}.

\subsection{Proof of Corollary \ref{coro:middle-OT}}

Let $b^z\in \Bypass_\gamma(\Sigma, (M\backslash \Delta_{\OT},\xi))$, $z\in \NS^k$, be a family of bypasses.
We explain how to homotope this family to a constant bypass. 

Since $b^z$ is contractible, relative to $\gamma$, as a family of smooth half-disk embeddings, we can find a family of smooth arcs $\delta^{z,r}:[0,1]\rightarrow M$, $(z,r)\in \NS^k\times[0,1]$, such that 
\begin{itemize}
    \item $\delta:=\delta^{z,0}$ is independent of $z\in \NS^k$. 
    \item $\delta^{z,r}_{|\Op(\{0,1\})}=\delta_{|\Op(\{0,1\})}$ are Legendrian arcs, for all $(z,r)\in \NS^k\times [0,1]$. 
    \item $\image(\delta^{z,r})\cap \Delta_{\OT}=\delta^{z,r}(1)=\delta (1)$.
    \item $\image(\delta^{z,r})\cap \Sigma=\delta^{z,r}(0).$
    \item $\delta^{z,1}(t)=b^z(0,t)$ for $t\in[0,\varepsilon]$, where $\varepsilon>0$ is a small constant. 
    \item $\image(\delta^{z,1}_{|(\varepsilon, 1]})\cap \image(b^z)=\emptyset$.
\end{itemize}

The first condition implies that the parameter space can be thought as a disk. Therefore, we can apply Fuchs-Tabachnikov Theorem \ref{teo:Fuchs-Taba} to further ensure that 
\begin{itemize}
    \item $\delta^{z,r}$ is Legendrian.
\end{itemize}

Apply the Contact Isotopy Extension Theorem to the family of Legendrian arcs $\delta^{z,r}$ to find a family of contactomorphisms $\varphi^{z,r}\in \Cont(M,\xi;\rel \Sigma\cup \Delta_{\OT})$, $(z,r)\in \NS^k\times[0,1]$, such that 
\begin{itemize}
    \item $\varphi^{z,0}=\Id$,
    \item $\varphi^{z,r}\circ \delta=\delta^{z,r}$.
\end{itemize}

We use this family of contactomorphisms to define a homotopy of bypasses, 
$$ b^{z,r}=(\varphi^{z,r})^{-1}\circ b^z; $$
between $b^{z,0}=b^z$ and a new family $b^{z,1}$. Observe that the Legendrian arc $\delta$ is disjoint from all the bypasses $b^{z,1}$, $z\in\NS^k$, away from the attaching region. Therefore, we can use the arc $\delta$ together with Lemma \ref{lem:bypassAttachingArc} to find a bypass disk embedding $B$ disjoint from the whole family $b^{z,1}$. Apply the Bypass in the Middle Theorem \ref{teo:bypassMiddle} to conclude that $b^{z,1}$ is a contractible family of bypasses. 
\qed


\subsection{Proof of Corollary \ref{cor:OvertwistedDisks}}


The map $ev_0$ is a surjective Serre fibration, we must show that the fiber is weakly contractible.

Fix a base is point $(p,v)\in\CFr(M\setminus \Delta_{\OT},\xi)$ the fiber is $\Emb^{(p,v)}_{\OT}(\D^2,(M\setminus\Delta_{\OT},\xi))$, which is the space of embeddings $e\in \Emb_{\OT}(\D^2,(M\setminus\Delta_{\OT},\xi))$ such that $ev_0(e)=(e(0),d_0e(e_1,e_2))=(p,v)$.

    By Giroux realization Theorem \ref{teo:realizacion-Giroux} the space $\Emb^{(p,v)}_{\OT}(\D^2,(M\setminus\Delta_{\OT},\xi))$ is weakly homotopy equivalent to the space of convex embeddings $e:\D^2\subset \R^2(x,y)\to (M\setminus\Delta_{\OT},\xi)$
    such that 
    \begin{enumerate}[label=(\roman*)]
        \item $ev_0(e)=(p,v)$,
        \item $e_{|\D^2\cap\{x\leq 0\}}$ and $e_{|\D^2\cap\{x\geq 0\}}$ are bypass disk embeddings.
    \end{enumerate}
    Denote this space as $\DB_{(p,v)}(M\setminus\Delta_{\OT},\xi)$.

    We will show that $\DB_{(p,v)}(M\setminus\Delta_{\OT},\xi)$ is contractible.
    Let $E:\NS^k\to \DB_{(p,v)}(M\setminus\Delta_{\OT},\xi)$ be any sphere. Denote the Legendrian embeddings given by the restriction of each $E(z)$ to the $Y$-axis by
    $\gamma^z=E(z)_{|\D^2\cap\{x=0\}}:[-1,1]\to(M\setminus\Delta_{\OT},\xi),y\mapsto \gamma^z(y)=E(z)(0,y)$.
    
    Condition (i) above implies that, after a possible contact isotopy, we can assume that 
    \begin{itemize}
        \item [(a)] $\gamma^z\equiv \gamma$ is constant and,
        \item [(b)] $E(z)_{|\D^2\cap\{-\varepsilon\leq x\leq\varepsilon\}}$ is constant for some small $\varepsilon>0$.
    \end{itemize}

    Our goal is to apply the Bypass in the Middle Theorem \ref{teo:bypassMiddle} to both families $E(z)_{|\D^2\cap\{x\leq 0\}}$ and $E(z)_{|\D^2\cap\{x\geq 0\}}$ of bypasses to conclude. 

    First, we observe that both families of bypasses are attached to the same convex rectangle with Legendrian boundary. This rectangle is obtained by slightly enlarging $\gamma$ and pushing it forward and backwards in the direction of the contact vector field transverse to $E(z)$. We denote such a rectangle embedding by $R:[-1,1]\times[-2,2]\to (M\setminus\Delta_{\OT},\xi)$. It satisfies the following properties such that
    \begin{itemize}
        \item $\image (R)\cap \image (E(z))=\image (\gamma)$ and $R(0,y)=\gamma(y)$ for $-1\leq y \leq 1$,
        \item For each $z_0\in [-1,1]$, the horizontal segment $H_{z_0}=\{R(z_0,y):-2\leq y \leq 2\}$ is a Legendrian ruling.
        \item For each $y_0\in\{-2,-\frac12,\frac12,2\}$, the vertical segment $V_{y_0}=\{R(z,y_0):-1\leq z \leq 1\}$ is a Legendrian divide.
        \item The dividing set $\Gamma_R$ of $R$ is given by the vertical segments \hfill\break
        $V_{y_0}=\{R(z,y_0):-1\leq z\leq 1\}$, where $y_0\in \{-1,0,1\}$.
    \end{itemize}

    Observe that $E(z)_{|\D^2\cap\{x\geq 0\}}$, $z\in\NS^k$, is a family of bypasses for $R$ with attaching arc $\gamma$ attached from above. On the other hand, $E(z)_{|\D^2\cap\{x\leq 0\}}$, $z\in \NS^k$, is also a family of bypasses for $R$ with attaching arc $\gamma$ but attached from below.

    We apply Corollary \ref{coro:middle-OT} to the family of bypass embeddings $E(z)_{|\D^2\cap\{x\geq 0\}}$ combined with the Contact Isotopy Extension Theorem to find a homotopy $E^r:\NS^k\rightarrow \DB_{(p,v)}(M\setminus\Delta_{\OT},\xi)$, $r\in[0,1]$, so that $E^0=E$ and
    $$ E^1(z)_{|\D^2\cap\{x\geq 0\}}=:\tilde{b} $$
    is a constant bypass. 

    It remains to fix the family $E^1(z)_{|\D^2\cap\{x\leq 0\}}$, $z\in \NS^k$. The argument is completely analogous to the one just explained since the process can be done relative to $\tilde{b}$. This concludes the proof. \qed
    \begin{figure}[h]
        \centering
        \includegraphics[scale=0.4]{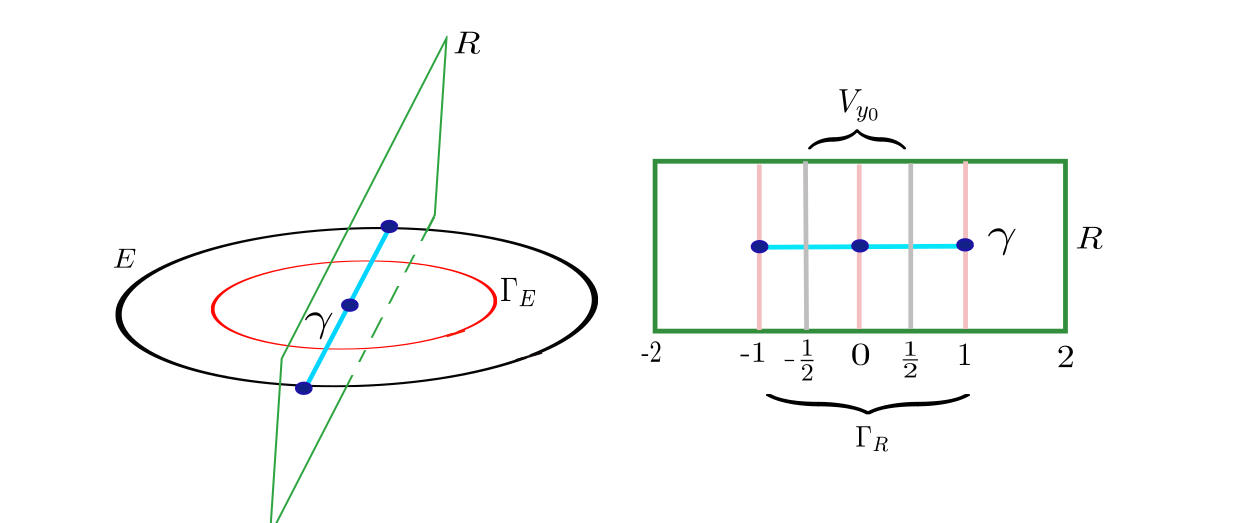}
        \caption{ Bypasses $E(z)_{|\D^2\cap\{x\geq 0\}}$ and $E(z)_{|\D^2\cap\{x\leq 0\}}$ with attaching arc $\gamma$.}
        \label{fig:middle2}
    \end{figure}

\subsection{Proof of Theorem \ref{thm:StabilizationMapsLooseLegendrians}}\label{subsec:LooseLegendriansStab}

    We prove the result for the positive stabilization map $S_+^t$ since for the negative stabilization map $S_-^t$ the proof is the same. 
  
    The surjectivity of the map  $\pi_0(S^t_+)$ follows from Lemma \ref{lem:bypassAttachingArc}. Indeed, given $\gamma\in \Leg(M\setminus \Delta_{\OT},\xi)$ we can use the Lemma to find a bypass attached to $\gamma$ to destabilize $\gamma$. This is just the well-known fact that the connected sum of a Legendrian and the boundary of an overtwisted disk disjoint form it is a destabilization of the initial Legendrian. 
    
    To conclude we will check that for all $k\geq 1$  and every pair of maps \break $\Phi:\D^k\to \Leg(M\setminus \Delta_{\OT})$ and  $\partial\tilde{\Phi}:\partial\D^k\to \Leg(M\setminus \Delta_{\OT}),$ 
    such that 
    $ \Phi_{|\partial \D^k}=S^t_+\circ\partial\tilde{\Phi};$
    there exists, after a possible homotopy of $\Phi$ relative to $\partial \D^k$, a map $\tilde{\Phi}:\D^k\to \Leg(M\setminus\Delta_{\OT})$ such that
\begin{itemize}
    \item [(i)] $\tilde{\Phi}_{|\partial\D^k}=\partial\tilde{\Phi},$
    \item [(ii)] $S^t_+\circ\tilde{\Phi}=\Phi.$
\end{itemize}

The assumption $\Phi_{|\partial \D^k}=S^t_+\circ\partial\tilde{\Phi}$ is equivalent to the existence of a family of bypass embeddings $B:\partial\D^k\to \mathcal{B}(M\setminus\Delta_{\OT},\xi)$ such that
    \begin{enumerate}
        \item The attaching arc of $B(z)$ lies in $\Phi(z)$ for all $z\in \partial \D^k$
        \item The bypass $B(z)$ and the Legendrian $\Phi(z)$ intersectect only at the attaching arc of $B(z)$ for all $z\in \partial \D^k$.
        \item Replacing the attaching arc of $B(z)$ in $\Phi(z)$ by the Honda arc of $B(z)$, gives (after smoothing with the Lemma \ref{lem:pivot}) the Legendrian $\tilde{\Phi}(z)$.
    \end{enumerate}

Moreover, the existence of the map $\tilde{\Phi}$ will follow from the existence of an extension \break $\tilde{B}:\D^k\to \mathcal{B}(M\setminus\Delta_{\OT},\xi)$ of the map $B$, i.e. $\tilde{B}_{|\partial \D^k}=B$; that satisfies properties (1) and (2) above for all $z\in \D^k$.

Fix coordinates $(z,r)\in \D^k=\partial \D^k\times [0,1]/\simeq $ in the disk $\D^k$. The Contact Isotopy Extension Theorem allows to find a family of contactomorphisms $\varphi:\D^k\to \Cont(M\setminus\Delta_{\OT},\xi)$ such that
    \begin{itemize}
        \item $\varphi(z,0)=\Id$,
        \item $\varphi(z,r)\circ\Phi(z,0)=\Phi(z,r)$ for $(z,r)\in\partial\D^k\times[0,1]$. 
    \end{itemize}
Define the family of bypasses $\hat{B}:\partial\D^k\times[0,1]\to  \mathcal{B}(M\setminus\Delta_{\OT},\xi)$ via the expression $$\hat{B}(z,r)=\varphi(z,r)\circ\varphi^{-1}(z,1)\circ B(z,1).$$ This family extends the initial family of bypasses $B$, i.e. $\hat{B}_{|\partial\D^k\times\{1\}}=B$; and satisfies properties (1) and (2) above. However, in general, the map $\hat{B}_{|\partial \D^k\times\{0\}}$ is not constant as required and defines a $\partial \D^k$-family of bypasses, with fixed attaching arc over the Legendrian $\Phi(z,0)$. Nevertheless, since all these bypasses avoid a fixed overtiwsted disk, we can invoke Corollary \ref{coro:middle-OT} to contract this family to a fixed bypass and, therefore, define the required map $\tilde{B}:\D^k\to  \mathcal{B}(M\setminus\Delta_{\OT},\xi)$. This concludes the argument. 
\qed


\subsection{Proof of Corollary \ref{cor:LooseLegendrians}}


This essentially follows from Theorem \ref{thm:StabilizationMapsLooseLegendrians} combined with the Fuchs-Tabachnikov theorem \ref{teo:Fuchs-Taba}. We must show that the maps 
$$ \pi_k(i):\pi_k(\Leg(M\backslash \Delta_{\OT},\xi))\rightarrow \pi_k(\FLeg(M\backslash \Delta_{\OT}))$$ induced by the natural inclusion $i:\Leg(M\backslash \Delta_{\OT},\xi)\hookrightarrow \FLeg(M\backslash \Delta_{\OT},\xi)$ are injective and surjective for all $k$. 

We check first the injectivity property. Let $A\in\ker(\pi_k(i))$ be an element of the kernel. Then, Theorem \ref{teo:Fuchs-Taba} implies that there exists times $t_{1}^{+},t_{1}^{-},t_{2}^{+},t_{2}^{-},\ldots,t_{n}^{+},t_{n}^{-}\in\NS^1$ so that 
$$ A \in \ker \pi_k(S^{t_{n}^{-}}_-\circ S^{t_{n}^{+}}_+\circ \cdots S^{t_{1}^{-}}_-\circ S^{t_{1}^{+}}_+).$$
By Theorem \ref{thm:StabilizationMapsLooseLegendrians} we conclude $A=0$.

Let us prove that the map is surjective. Let $A\in\pi_k(\FLeg(M\backslash \Delta_{\OT},\xi))$ be a formal class. Then, by Theorem \ref{teo:Fuchs-Taba} there exists another formal class $\hat{A}\in\pi_k(\FLeg(M\backslash \Delta_{\OT},\xi))$, where the homotopy group has a different base point, obtained from $A$ by several stabilizations such that
$$ \hat{A} = \pi_k(i) (\hat{B})$$ for some $\hat{B}\in \pi_k(\Leg(M\backslash \Delta_{\OT},\xi))$. Let $B\in \pi_k(\Leg(M\backslash \Delta_{\OT},\xi))$ be the unique Legendrian class satisfying
$$ \hat{B}=\pi_k(S^{t_{n}^{-}}_-\circ S^{t_{n}^{+}}_+\circ \cdots S^{t_{1}^{-}}_-\circ S^{t_{1}^{+}}_+)(B),$$ that exists because of Theorem \ref{thm:StabilizationMapsLooseLegendrians}. It remains to check that $\pi_k(i)(B)=A$. The only subtlety is that, a priori, we could lose control of the formal class during the previous process. However, we claim that this is not the case. Following \cite{CasalsDelPino}, the class $\hat{A}$ is obtained first by making the formal Legendrians (of a representative) of $A$ Legendrian near many points of the domain and adding several Legendrian stabilizations. Hence the claim follows from the following observation.
\begin{lemma}
    Let $K$ be a compact parameter space, $\varepsilon>0$ and $\delta^k:[-1-\varepsilon,1+\varepsilon]\rightarrow (M,\xi)$, $k\in K$, a family of Legendrian arcs. Assume that there exists two families of bypass embeddings $b_i^k\in \mathcal{B}(M,\xi)$, $(i,k)\in\{0,1\}\times K$, so that 
    \begin{itemize}
        \item The attaching arc of $b_i^k$ is $\delta^k[-1,1]$ for all $(i,k)\in\{0,1\}\times K$,
        \item $b_0^k$ and $b_1^k$ coincide near the attaching arc for all $k\in K$, and
        \item $\image(b_i^k)\cap \image(\delta^k)=\delta^k[-1,1]$ for all $(i,k)\in\{0,1\}\times K$.
    \end{itemize}
    Then, the families of Legendrian arcs $\beta^k_0$ and $\beta^k_1$ obtained as a destabilization of the family $\delta^k$ by means of $b^k_0$ and $b^k_1$, respectively, are formally Legendrian isotopic.
\end{lemma}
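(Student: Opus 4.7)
The plan is to establish the formal Legendrian isotopy in three stages: first identify that both bypasses share the same sign, then construct a parametric smooth isotopy via the union of the bypass disks, and finally upgrade this to a formal Legendrian isotopy using each bypass disk as a bundle-level cobordism from the attaching arc to the corresponding Honda arc.

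First I would verify that $b_0^k$ and $b_1^k$ have the same sign. The sign of a bypass is determined by the sign of the central elliptic singularity on its attaching arc, and since the two families coincide on a neighborhood of the common attaching arc $\delta^k[-1,1]$ for each $k\in K$, the central singularities agree. Without loss of generality, both bypasses are positive.

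Next I would construct a parametric smooth isotopy. Writing $\eta_i^k$ for the Honda arc of $b_i^k$, the two Honda arcs share endpoints, namely the two positive elliptic singularities of the attaching arc. Gluing $b_0^k(\D^2_+)$ and $b_1^k(\D^2_+)$ along the neighborhood where they coincide yields, for each $k$, a smoothly immersed disk with boundary $\eta_0^k\cup\eta_1^k$, providing a parametric smooth isotopy from $\eta_0^k$ to $\eta_1^k$ rel endpoints. Combined with the unchanged portion of $\delta^k$ and the Pivot Lemma~\ref{lem:pivot} applied to smooth the resulting corners, this produces a parametric smooth isotopy from $\beta_0^k$ to $\beta_1^k$.

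Finally I would enhance this to a formal Legendrian isotopy. Each half-disk $b_i^k(\D^2_+)$ carries a canonical smooth family of arcs $\sigma_i^{k,s}\colon[-1,1]\to M$, $s\in[0,1]$, interpolating between the attaching arc at $s=0$ and a smoothing of the Honda arc at $s=1$; the tangent vector of each $\sigma_i^{k,s}$ lies in the tangent plane $Tb_i^k(\D^2_+)$. Since $Tb_i^k(\D^2_+)$ and $\xi$ restricted to $b_i^k(\D^2_+)$ are oriented $2$-plane fields on a contractible surface which coincide along the Legendrian boundary arcs, they are homotopic relative to that boundary, and this homotopy induces a continuous path of bundle monomorphisms $T[-1,1]\to TM$ ending in a map landing in $\xi$. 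This promotes each bypass disk to a parametric formal Legendrian homotopy from $\delta^k$ to $\beta_i^k$; concatenating the one induced by $b_0^k$ with the reverse of the one induced by $b_1^k$ yields the required formal Legendrian isotopy from $\beta_0^k$ to $\beta_1^k$.

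The main obstacle is making the last step rigorous in the parametric setting: one must verify continuity in $k$ of the chosen $2$-plane field homotopy on the disks. This is handled by observing that the relevant relative obstruction, comparing $\xi$ with the tangent bundle of the disk (with the two agreeing on the Legendrian boundary), vanishes on the contractible disk, and the space of such homotopies is itself contractible, so a continuous selection over the compact parameter space $K$ exists by a standard partition-of-unity argument.
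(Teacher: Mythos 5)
Your final step contains a genuine gap. You claim that each bypass disk induces ``a parametric formal Legendrian homotopy from $\delta^k$ to $\beta_i^k$'' and then concatenate. But $\delta^k$ and its destabilization $\beta_i^k$ lie in \emph{different} path components of the space of formal Legendrian embeddings: a (de)stabilization changes the relative Thurston--Bennequin invariant and the relative rotation number, and these are invariants of the formal isotopy class. So no path in $\FLeg$ from $\delta^k$ to $\beta_i^k$ exists, and a concatenation of two such paths cannot be what produces the desired formal isotopy from $\beta_0^k$ to $\beta_1^k$. The error is hidden in the justification of the formal data: the plane fields $Tb_i^k(\D^2_+)$ and $\xi|_{b_i^k(\D^2_+)}$ do \emph{not} coincide along the Legendrian boundary arcs (along a Legendrian curve they only share the tangent line, and they agree as planes only at singularities of the characteristic foliation), so ``homotopic rel boundary'' is not available as stated; and even for two plane fields that do agree on $\partial\D^2$, the space of homotopies rel boundary is not contractible (its set of components is a torsor over $\pi_3(\NS^2)\cong\Z$), so the formal data attached to the sliding isotopy is genuinely ambiguous --- this ambiguity is exactly where the rotation number lives. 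A symptom of the gap is that your step 3 never uses the hypothesis that $b_0^k$ and $b_1^k$ coincide near the attaching arc (nor their common sign); without it the same argument would ``prove'' that a positive and a negative destabilization of $\delta^k$ are formally isotopic, which is false. A smaller issue: in step 2, gluing the two half-disks gives only an immersed disk (the hypotheses do not make $b_0^k$ and $b_1^k$ disjoint away from $\gamma$), and an immersed bounding disk yields a homotopy, not an isotopy; the smooth isotopy $\beta_0^k\simeq\delta^k\simeq\beta_1^k$ should instead be obtained by sliding across each \emph{embedded} half-disk separately, using that $\image(b_i^k)\cap\image(\delta^k)=\delta^k[-1,1]$.

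The paper's proof takes a different and much shorter route: the bypass embeddings $b_i^k$ identify a neighborhood of each disk with a fixed local model (via Theorem \ref{teo:giroux-entorno}, since the characteristic foliation determines the contact germ), continuously in $k$, and these identifications can be taken to agree near the attaching arc, where $b_0^k=b_1^k$. In the local model the destabilized arc, \emph{together with its formal data}, is a single standard picture; transporting it by the two identifications produces $\beta_0^k$ and $\beta_1^k$ with matching formal data, which is what makes the formal corrections over the two halves of the smooth isotopy cancel. To repair your argument you would need to replace the two impossible paths $\delta^k\rightsquigarrow\beta_i^k$ by a single two-parameter family $F_{t,s}$ over the concatenated smooth isotopy, constructed from the common germ of $b_0^k$ and $b_1^k$ near $\gamma$ --- which is essentially the paper's reduction to the local model.
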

\begin{proof}
    The existence of the two families of bypass embeddings allows to reduce the statement to the non-parametric case and to a precise local model. In the latter the result is obvious.
\end{proof}
After this Lemma $\pi_k(i)(B)=A$ and, therefore, the map $\pi_k(i)$ is surjective. This concludes the argument.\qed

\bibliographystyle{plain}
\bibliography{main}
\end{document}